\documentclass{amsart}
\usepackage{amsmath,amssymb,bm}
\usepackage{amsmath,amsthm,amssymb,cases}
\usepackage[all]{xy}
%
%
\usepackage{amsmath}
\usepackage{amssymb}
\usepackage{amscd}
\usepackage{amsthm}
\usepackage[dvips]{graphicx}
\usepackage{color}


\theoremstyle{definition}
\newtheorem{defn}{Definition}[section]

\newtheorem{rem}[defn]{Remark}
\theoremstyle{plain}
\newtheorem{thm}[defn]{Theorem}
\newtheorem{prop}[defn]{Proposition}
\newtheorem{lem}[defn]{Lemma}
\newtheorem{cor}[defn]{Corollary}

%


\numberwithin{equation}{section}
%
\allowdisplaybreaks
\sloppy

\title[generating level 2 twist subgroup]{A finite generating set for the level 2 twist subgroup of the mapping class group of a closed non-orientable surface}
\author[R.~Kobayashi]{Ryoma Kobayashi}
\address{
(Ryoma Kobayashi)
Department of General Education, Ishikawa National College of Technology, Tsubata, Ishikawa, 929-0392, Japan 
}
\email{kobayashi\_ryoma@ishikawa-nct.ac.jp}
\author[G.~Omori]{Genki Omori}
\address{
(Genki Omori)
Department of Mathematics,
Tokyo Institute of Technology,
Oh-okayama, Meguro, Tokyo 152-8551, Japan
}

\email{omori.g.aa@m.titech.ac.jp}

\subjclass[2010]{57M05, 57M07, 57M20, 57M60}

\date{\today}
\begin{document}
\maketitle
\begin{abstract}
We obtain a finite generating set for the level 2 twist subgroup of the mapping class group of a closed non-orientable surface. The generating set consists of crosscap pushing maps along non-separating two-sided simple loops and squares of Dehn twists along non-separating two-sided simple closed curves. We also prove that the level 2 twist subgroup is normally generated in the mapping class group by a crosscap pushing map along a non-separating two-sided simple loop for genus $g\geq 5$ and $g=3$. As an application, we calculate the first homology group of the level 2 twist subgroup for genus $g\geq 5$ and $g=3$.
\end{abstract}

\section{Introduction}

Let $N_{g,n}$ be a compact connected non-orientable surface of genus $g\geq 1$ with $n\geq 0$ boundary components. The surface $N_g=N_{g,0}$ is a connected sum of $g$ real projective planes. The {\it mapping class group} $\mathcal{M}(N_{g,n})$ of $N_{g,n}$ is the group of isotopy classes of self-diffeomorphisms on $N_{g,n}$ fixing the boundary pointwise and the {\it twist subgroup} $\mathcal{T}(N_{g,n})$ of $\mathcal{M}(N_{g,n})$ is the subgroup of $\mathcal{M}(N_{g,n})$ generated by all Dehn twists along two-sided simple closed curves. Lickorish~\cite{Lickorish2} proved that $\mathcal{T}(N_g)$ is an index 2 subgroup of $\mathcal{M}(N_g)$ and the non-trivial element of $\mathcal{M}(N_g)/\mathcal{T}(N_g)\cong \mathbb Z/2\mathbb Z=:\mathbb Z_2$ is represented by a ``Y-homeomorphism''. We define a Y-homeomorphism in Section~\ref{Preliminaries}. Chillingworth~\cite{Chillingworth} gave an explicit finite generating set for $\mathcal{T}(N_g)$ and showed that $\mathcal{T}(N_2)\cong \mathbb Z_2$. The first homology group $H_1(G)$ of a group $G$ is isomorphic to the abelianization $G^{ab}$ of $G$. The group $H_1(\mathcal{T}(N_g))$ is trivial for $g\geq 7$, $H_1(\mathcal{T}(N_3))\cong \mathbb Z_{12}$, $H_1(\mathcal{T}(N_4))\cong \mathbb Z_2\oplus \mathbb Z$ and $H_1(\mathcal{T}(N_g))\cong \mathbb Z_2$ for $g=5, 6$. These results were shown by Korkmaz~\cite{Korkmaz1} for $g\geq 7$ and by Stukow~\cite{Stukow} for the other cases. 

Let $\Sigma _{g,n}$ be a compact connected orientable surface of genus $g\geq 0$ with $n\geq 0$ boundary components. The mapping class group $\mathcal{M}(\Sigma _{g,n})$ of $\Sigma _{g,n}$ is the group of isotopy classes of orientation preserving self-diffeomorphisms on $\Sigma _{g,n}$ fixing the boundary pointwise. Let $S$ be either $N_{g,n}$ or $\Sigma _{g,n}$. For $n=0$ or $1$, we denote by $\Gamma _2(S)$ the subgroup of $\mathcal{M}(S)$ which consists of elements acting trivially on $H_1(S;\mathbb Z_2)$. $\Gamma _2(S)$ is called the {\it level 2 mapping class group} of $S$. For a group $G$, a normal subgroup $H$ of $G$ and a subset $X$ of $H$, $H$ is {\it normally generated in $G$ by $X$} if $H$ is the normal closure of $X$ in $G$. In particular, for $X=\{x_1,\dots ,x_n\}$, if $H$ is the normal closure of $X$ in $G$, we also say that $H$ is {\it normally generated in $G$ by $x_1,\dots ,x_n$}. In the case of orientable surfaces, Humphries~\cite{Humphries} proved that $\Gamma _2(\Sigma _{g,n})$ is normally generated in $\mathcal{M}(\Sigma _{g,n})$ by the square of the Dehn twist along a non-separating simple closed curve for $g\geq 1$ and $n=0$ or $1$. In the case of non-orientable surfaces, Szepietowski~\cite{Szepietowski1} proved that $\Gamma _2(N_g)$ is normally generated in $\mathcal{M}(N_g)$ by a Y-homeomorphism for $g\geq 2$. Szepietowski~\cite{Szepietowski2} also gave an explicit finite generating set for $\Gamma _2(N_g)$. This generating set is minimal for $g=3$, $4$. Hirose and Sato~\cite{Hirose-Sato} gave a minimal generating set for $\Gamma _2(N_g)$ when $g\geq 5$ and showed that $H_1(\Gamma _2(N_g))\cong \mathbb Z_2^{\binom{g}{3}+\binom{g}{2}}$.

We denote by $\mathcal{T}_2(N_g)$ the subgroup of $\mathcal{T}(N_g)$ which consists of elements acting trivially on $H_1(N_g;\mathbb Z_2)$ and we call $\mathcal{T}_2(N_g)$ the {\it level 2 twist subgroup of} $\mathcal{M}(N_g)$. Recall that $\mathcal{T}(N_2)\cong \mathbb Z_2$ and Chillingworth~\cite{Chillingworth} proved that $\mathcal{T}(N_2)$ is generated by the Dehn twist along a non-separating two-sided simple closed curve. $\mathcal{T}_2(N_2)$ is a trivial group because Dehn twists along non-separating two-sided simple closed curves induce nontrivial actions on $H_1(N_g;\mathbb Z_2)$. Let $\operatorname{Aut}(H_1(N_g;\mathbb Z_2),\cdot )$ be the group of automorphisms on $H_1(N_g;\mathbb Z_2)$ preserving the intersection form $\cdot $ on $H_1(N_g;\mathbb Z_2)$. Since the action of $\mathcal{M}(N_g)$ on $H_1(N_g;\mathbb Z_2)$ preserves the intersection form $\cdot $, there is the natural homomorphism from $\mathcal{M}(N_g)$ to $\operatorname{Aut}(H_1(N_g;\mathbb Z_2),\cdot )$. McCarthy and Pinkall~\cite{McCarthy-Pinkall} showed that the restriction of the homomorphism to $\mathcal{T}(N_g)$ is surjective. Thus $\mathcal{T}_2(N_g)$ is finitely generated. 

In this paper, we give an explicit finite generating set for $\mathcal{T}_2(N_g)$ (Theorem~\ref{main-thm}). The generating set consists of ``crosscap pushing maps'' along non-separating two-sided simple loops and squares of Dehn twists along non-separating two-sided simple closed curves. We review the crosscap pushing map in Section~\ref{Preliminaries}. We can see the generating set for $\mathcal{T}_2(N_g)$ in Theorem~\ref{main-thm} is minimal for $g=3$ by Theorem~\ref{first-homology}. We prove Theorem~\ref{main-thm} in Section~\ref{section-finite-gen}. In the last part of Subsection~\ref{proof_main}, we also give the smaller finite generating set for $\mathcal{T}_2(N_g)$ (Theorem~\ref{main-thm2}). However, the generating set consists of crosscap pushing maps along non-separating two-sided simple loops, squares of Dehn twists along non-separating two-sided simple closed curves and squares of Y-homeomorphisms.

By using the finite generating set for $\mathcal{T}_2(N_g)$ in Theorem~\ref{main-thm}, we prove the following theorem in Section~\ref{section-normal-gen}.

\begin{thm}\label{normal-gen}
For $g=3$ and $g\geq 5$, $\mathcal{T}_2(N_g)$ is normally generated in $\mathcal{M}(N_g)$ by a crosscap pushing map along a non-separating two-sided simple loop (See Figure~\ref{crosscap_normal1}). 

$\mathcal{T}_2(N_4)$ is normally generated in $\mathcal{M}(N_4)$ by a crosscap pushing map along a non-separating two-sided simple loop and the square of the Dehn twist along a non-separating two-sided simple closed curve whose complement is a connected orientable surface (See Figure~\ref{gamma1234}).
\end{thm}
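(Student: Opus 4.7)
The plan is to start from the explicit finite generating set for $\mathcal{T}_2(N_g)$ provided by Theorem~\ref{main-thm}, consisting of crosscap pushing maps along non-separating two-sided simple loops together with squares of Dehn twists along non-separating two-sided simple closed curves, and to verify that every one of these generators lies in the normal closure, inside $\mathcal{M}(N_g)$, of the element(s) listed in the statement. The ``crosscap-pushing half'' of the generating set is the easy half: conjugation by any $\phi\in\mathcal{M}(N_g)$ sends the crosscap pushing map along a loop $\alpha$ to the crosscap pushing map along $\phi(\alpha)$, so the change-of-coordinates principle on $N_g$ (transitivity of the $\mathcal{M}(N_g)$-action on non-separating two-sided simple loops of each fixed topological type) makes all such generators simultaneously $\mathcal{M}(N_g)$-conjugate to the one depicted in Figure~\ref{crosscap_normal1}.

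The squares of Dehn twists are handled by a case split on the topological type of a non-separating two-sided simple closed curve $c\subset N_g$. There are two possibilities: Type~A, where $N_g\setminus c$ is non-orientable (cut surface $N_{g-2,2}$), and Type~B, where $N_g\setminus c$ is orientable (cut surface $\Sigma_{(g-2)/2,2}$, which forces $g$ to be even). Since $\mathcal{M}(N_g)$ acts transitively on each type, it is enough to treat one representative from each. For Type~A one selects a two-sided simple loop $\alpha$ running through a crosscap on the non-orientable side of $c$ and writes down a relation, supported in a small non-orientable subsurface containing $c\cup\alpha$, that expresses the square of the Dehn twist along $c$ as a product of conjugates of crosscap pushing maps; this absorbs every Type~A generator into the normal closure already controlled above, and it already completes the argument whenever $g=3$ or $g$ is odd.

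The main obstacle is the Type~B case. For even $g\geq 6$ the orientable complement $\Sigma_{(g-2)/2,2}$ coexists with a non-orientable piece of sufficient non-orientable genus on the opposite side of $c$, so the plan is to convert a Type~B generator into a Type~A generator, modulo the normal closure already built up from crosscap pushing maps, by sliding a crosscap across $c$ through a sequence of crosscap pushing maps; this ``crosscap swap'' is the technical heart of the $g\geq 5$ statement. For $g=4$, however, the complement is only $\Sigma_{1,2}$ and no crosscap is available on the far side of $c$ to slide, so the Type~B generator is genuinely an independent normal generator---this is exactly the second element appearing in Figure~\ref{gamma1234}. Once the crosscap-swap is in place, what remains is routine bookkeeping combining it with the two transitivity statements above.
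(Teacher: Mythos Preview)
Your overall strategy---start from the generating set of Theorem~\ref{main-thm} and show each generator lies in the claimed normal closure---matches the paper's. The treatment of the crosscap pushing generators (all $\mathcal{M}(N_g)$-conjugate by change of coordinates) and of the Type~A Dehn twist squares (via what the paper states as Lemma~\ref{t^2_product_crosscap}) is also essentially the same.

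Where you diverge is the Type~B analysis for even $g\geq 6$, and this is unnecessary work caused by not looking closely at which curves actually occur in the generating set. The only Dehn twist squares appearing in Theorem~\ref{main-thm} are the $T_{1,j,k,l}^2$, along curves $\alpha_{1,j,k,l}$ that pass through exactly four crosscaps. For $g\geq 5$ there is always an index $m\notin\{1,j,k,l\}$, and the one-sided core $\alpha_m$ of the $m$-th crosscap lies in $N_g\setminus\alpha_{1,j,k,l}$; hence every such complement is non-orientable and every such generator is already Type~A. The paper's proof therefore ends for all $g\geq 5$ (even or odd) as soon as Lemma~\ref{t^2_product_crosscap} is in hand, with no Type~B curves to treat. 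The Type~B case genuinely occurs only at $g=4$, where $\alpha_{1,2,3,4}$ uses all four crosscaps and $T_{1,2,3,4}^2$ is simply kept as the second normal generator.

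Your ``crosscap swap'' sketch for even $g\geq 6$ is also topologically off as written: if $c$ is non-separating with connected orientable complement, that complement is a single connected orientable surface $\Sigma_{(g-2)/2,2}$, so there is no ``non-orientable piece on the opposite side of $c$'' and no crosscap in $N_g\setminus c$ to slide. Fortunately, as noted above, you never need this step.
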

The x-marks as in Figure~\ref{crosscap_normal1} and Figure~\ref{gamma1234} mean M\"{o}bius bands attached to boundary components in this paper and we call the M\"{o}bius band the {\it crosscap}. The group which is normally generated in $\mathcal{M}(N_g)$ by the square of the Dehn twist along a non-separating two-sided simple closed curve is a subgroup of $\mathcal{T}_2(N_g)$ clearly. The authors do not know whether $\mathcal{T}_2(N_g)$ is generated by squares of Dehn twists along non-separating two-sided simple closed curves or not.

\begin{figure}[h]
\includegraphics[scale=0.70]{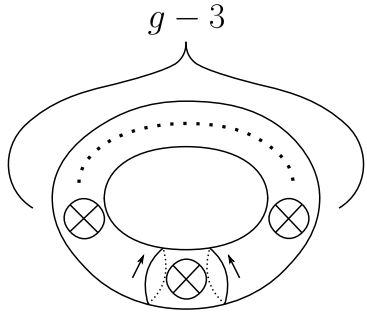}
\caption{A crosscap pushing map along a non-separating two-sided simple loop is described by a product of Dehn twists along non-separating two-sided simple closed curves as in the figure.}\label{crosscap_normal1}
\end{figure}

\begin{figure}[h]
\includegraphics[scale=0.70]{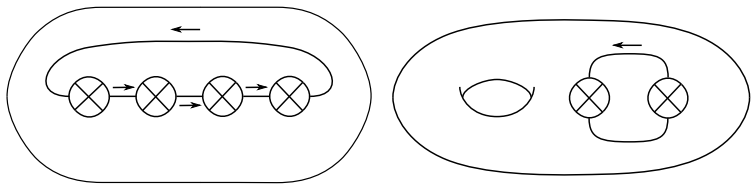}
\caption{A non-separating two-sided simple closed curve on $N_4$ whose complement is a connected orientable surface.}\label{gamma1234}
\end{figure}

As an application of Theorem~\ref{normal-gen}, we calculate $H_1(\mathcal{T}_2(N_g))$ for $g\geq 5$ in Section~\ref{section-first-homology} and we obtain the following theorem.

\begin{thm}\label{first-homology} 
For $g=3$ and $g\geq 5$, the first homology group of $\mathcal{T}_2(N_g)$ is as follows:
\[
H_1(\mathcal{T}_2(N_g))\cong \left\{ \begin{array}{ll}
 \mathbb Z^2\oplus \mathbb Z_2&\text{if} \ g=3,   \\
 \mathbb Z_2^{\binom{g}{3}+\binom{g}{2}-1}&\text{if} \ g\geq 5.
 \end{array} \right.
\]
\end{thm}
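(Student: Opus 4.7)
The plan is to bound $H_1(\mathcal{T}_2(N_g))$ by combining the explicit finite generating set of Theorem~\ref{main-thm}---crosscap pushing maps along non-separating two-sided simple loops together with squares of Dehn twists along non-separating two-sided simple closed curves---with the normal generation result of Theorem~\ref{normal-gen}. Since inner conjugation by $\mathcal{T}_2(N_g)$ acts trivially on $H_1(\mathcal{T}_2(N_g))$, the change-of-coordinates principle collapses the generating set modulo commutators into a manageable number of equivalence classes indexed by the combinatorics of how the relevant loops and curves sit in $N_g$.

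For $g\geq 5$, the main tool will be the short exact sequence
\begin{equation*}
1\longrightarrow \mathcal{T}_2(N_g)\longrightarrow \Gamma_2(N_g)\longrightarrow \mathbb{Z}_2\longrightarrow 1,
\end{equation*}
whose quotient is generated by the class of a Y-homeomorphism (a Y-homeomorphism lies in $\Gamma_2(N_g)$ but not in $\mathcal{T}(N_g)$). Since $H_2(\mathbb{Z}_2;\mathbb{Z})=0$, the five-term exact sequence degenerates to
\begin{equation*}
0\longrightarrow H_1(\mathcal{T}_2(N_g))_{\mathbb{Z}_2}\longrightarrow H_1(\Gamma_2(N_g))\longrightarrow \mathbb{Z}_2\longrightarrow 0.
\end{equation*}
Combined with Hirose--Sato's computation $H_1(\Gamma_2(N_g))\cong \mathbb{Z}_2^{\binom{g}{3}+\binom{g}{2}}$, this forces the coinvariants of $H_1(\mathcal{T}_2(N_g))$ to have order $2^{\binom{g}{3}+\binom{g}{2}-1}$. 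I would then verify by direct inspection on generators that the conjugation action of $\Gamma_2(N_g)/\mathcal{T}_2(N_g)\cong\mathbb{Z}_2$ on $H_1(\mathcal{T}_2(N_g))$ is trivial (so that coinvariants coincide with $H_1$), and that each generator has order at most two in the abelianization (using chain and braid relations among squares of Dehn twists). For the lower bound, I would restrict Hirose--Sato's $\binom{g}{3}+\binom{g}{2}$ explicit $\mathbb{Z}_2$-valued homomorphisms from $\Gamma_2(N_g)$ to $\mathcal{T}_2(N_g)$ and show that exactly one linear combination---the one detecting the Y-homeomorphism class---becomes trivial on restriction.

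For $g=3$, the free abelian factor $\mathbb{Z}^2$ requires a more hands-on treatment. The generating set from Theorem~\ref{main-thm} consists of only three elements (which forces the minimality statement), so I would work with these directly: exhibit explicit integer-valued homomorphisms on $\mathcal{T}_2(N_3)$ witnessing two independent free summands, and then pin down the torsion by exploiting the short exact sequence
\begin{equation*}
1\longrightarrow \mathcal{T}_2(N_3)\longrightarrow \mathcal{T}(N_3)\longrightarrow \operatorname{Aut}(H_1(N_3;\mathbb{Z}_2),\cdot )\longrightarrow 1
\end{equation*}
of McCarthy--Pinkall together with Stukow's computation $H_1(\mathcal{T}(N_3))\cong \mathbb{Z}_{12}$, which constrains the torsion summand to be $\mathbb{Z}_2$.

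The main obstacle is the upper bound in each regime, that is, confirming that the relations I identify exhaust all hidden relations in $\mathcal{T}_2(N_g)$. For $g\geq 5$ this reduces to exhibiting the single explicit relation among the generators forced by the five-term sequence and realizing it as a concrete Dehn-twist identity; for $g=3$ the bookkeeping is entirely explicit but requires a careful case analysis on the minimal generating set to rule out any further collapse beyond the expected $\mathbb{Z}^2\oplus\mathbb{Z}_2$.
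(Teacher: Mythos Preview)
For $g\geq 5$ your plan via the five-term exact sequence is exactly the paper's argument, but you have added work that is not needed: once the $\mathbb Z_2$-action on $H_1(\mathcal{T}_2(N_g))$ is known to be trivial, the sequence
\[
0\longrightarrow H_1(\mathcal{T}_2(N_g))\longrightarrow H_1(\Gamma_2(N_g))\cong \mathbb Z_2^{\binom{g}{3}+\binom{g}{2}}\longrightarrow \mathbb Z_2\longrightarrow 0
\]
already forces $H_1(\mathcal{T}_2(N_g))\cong \mathbb Z_2^{\binom{g}{3}+\binom{g}{2}-1}$; your separate ``order two via chain/braid relations'' and ``lower bound by restricting homomorphisms'' steps are redundant. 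What you leave unspecified is precisely the crux, namely \emph{how} to see the action is trivial. The paper's device is this: by Theorem~\ref{normal-gen}, $\mathcal{T}_2(N_g)$ is generated by crosscap pushing maps $\psi(\gamma)=t_{\gamma_1}t_{\gamma_2}^{-1}$; cutting $N_g$ along $\gamma_1\cup\gamma_2$ yields $N_{g-3,2}\sqcup N_{1,2}$, and since $g-3\geq 2$ one may choose a Y-homeomorphism $Y$ supported on the $N_{g-3,2}$ piece. Then $Y$ represents the nontrivial coset of $\Gamma_2(N_g)/\mathcal{T}_2(N_g)$ and literally commutes with $\psi(\gamma)$, so $[Y]\cdot[\psi(\gamma)]=[\psi(\gamma)]$ in $H_1$.

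For $g=3$ your outline has a genuine gap in the upper bound. With three generators, $H_1(\mathcal{T}_2(N_3))$ is a quotient of $\mathbb Z^3$; producing a surjection onto $\mathbb Z^2\oplus\mathbb Z_2$ is only a lower bound, and the extension $1\to\mathcal{T}_2(N_3)\to\mathcal{T}(N_3)\to\operatorname{Aut}(H_1(N_3;\mathbb Z_2),\cdot)\to 1$ together with $H_1(\mathcal{T}(N_3))\cong\mathbb Z_{12}$ controls only the \emph{coinvariants} of $H_1(\mathcal{T}_2(N_3))$ under a nontrivial finite group action, not $H_1$ itself. You cannot borrow the $g\geq 5$ trick either: the paper's closing remark shows that already the $\mathbb Z_2$-action of $\Gamma_2(N_3)/\mathcal{T}_2(N_3)$ on $H_1(\mathcal{T}_2(N_3))$ is nontrivial. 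The paper bypasses all of this by invoking Szepietowski's isomorphism $\theta\colon \Gamma_2(N_3)\xrightarrow{\ \cong\ }\Gamma_2(2)\subset GL(2,\mathbb Z)$; since Dehn twists act with determinant $1$ on the free part of $H_1(N_3;\mathbb Z)$, $\theta$ carries $\mathcal{T}_2(N_3)$ isomorphically onto $SL(2,\mathbb Z)[2]$. From $PSL(2,\mathbb Z)[2]\cong F_2$ and the centrality of $-E$ one gets a split extension $SL(2,\mathbb Z)[2]\cong F_2\times\mathbb Z_2$, hence $H_1(\mathcal{T}_2(N_3))\cong\mathbb Z^2\oplus\mathbb Z_2$ with no further bookkeeping.
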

In this proof, we use the five term exact sequence for an extension of a group for $g\geq 5$. The authors do not know the first homology group of $\mathcal{T}_2(N_4)$.

\section{Preliminaries}\label{Preliminaries}

\subsection{Crosscap pushing map}
Let $S$ be a compact surface and let $e:D^\prime \hookrightarrow {\rm int}S$ be a smooth embedding of the unit disk $D^\prime \subset \mathbb C$. Put $D:=e(D^\prime )$. Let $S^\prime $ be the surface obtained from $S-{\rm int}D$ by the identification of antipodal points of $\partial D$. We call the manipulation that gives $S^\prime $ from $S$ the {\it blowup of} $S$ {\it on} $D$. Note that the image $M$ of the regular neighborhood of $\partial D$ in $S-{\rm int}D$ by the blowup of $S$ on $D$ is a crosscap, where a crosscap is a M\"{o}bius band in the interior of a surface. Conversely, the {\it blowdown of} $S^\prime$ {\it on }$M$ is the following manipulation that gives $S$ from $S^\prime $. We paste a disk on the boundary obtained by cutting $S$ along the center line $\mu $ of $M$. The blowdown of $S^\prime $ on $M$ is the inverse manipulation of the blowup of $S$ on $D$.

Let $x_0$ be a point of $N_{g-1}$ and let $e:D^\prime \hookrightarrow N_{g-1}$ be a smooth embedding of a unit disk $D^\prime \subset \mathbb C$ to $N_{g-1}$ such that the interior of $D:=e(D^\prime )$ contains $x_0$. Let $\mathcal{M}(N_{g-1},x_0)$ be the group of isotopy classes of self-diffeomorphisms on $N_{g-1}$ fixing the point $x_0$, where isotopies also fix $x_0$. Then we have the {\it blowup homomorphism} 
\[
\varphi :\mathcal{M}(N_{g-1},x_0)\rightarrow \mathcal{M}(N_g)
\]
that is defined as follows. For $h \in \mathcal{M}(N_{g-1},x_0)$, we take a representative $h^\prime $ of $h$ which satisfies either of the following conditions: (a) $h^\prime |_{D}$ is the identity map on $D$, (b) $h^\prime (x)=e(\overline{e^{-1}(x)})$ for $x\in D$. Such $h^\prime $ is compatible with the blowup of $N_{g-1}$ on $D$, thus $\varphi (h)\in \mathcal{M}(N_g)$ is induced and well defined (c.f. \cite[Subsection~2.3]{Szepietowski1}). 

The {\it point pushing map} 
\[
j:\pi _1(N_{g-1},x_0)\rightarrow \mathcal{M}(N_{g-1},x_0)
\]
is a homomorphism that is defined as follows. For $\gamma \in \pi _1(N_{g-1},x_0)$, $j(\gamma )\in \mathcal{M}(N_{g-1},x_0)$ is described as the result of pushing the point $x_0$ once along $\gamma $. Note that for $x$, $y\in \pi _1(N_{g-1})$, $yx$ means $yx(t)=x(2t)$ for $0\leq t\leq \frac{1}{2}$ and $yx(t)=y(2t-1)$ for $\frac{1}{2}\leq t\leq 1$, and for elements $[f]$, $[g]$ of the mapping class group, $[f][g]$ means $[f\circ g]$.

We define the {\it crosscap pushing map} as the composition of homomorphisms:
\[
\psi :=\varphi \circ j:\pi _1(N_{g-1},x_0)\rightarrow \mathcal{M}(N_g).
\]
For $\gamma \in \pi _1(N_{g-1},x_0)$, we also call $\psi (\gamma )$ the {\it crosscap pushing map along $\gamma $}. 
Remark that for $\gamma $, $\gamma ^\prime \in \pi _1(N_{g-1},x_0)$, $\psi (\gamma )\psi (\gamma ^\prime )=\psi (\gamma \gamma ^\prime )$. The next two lemmas follow from the description of the point pushing map (See \cite[Lemma~2.2, Lemma~2.3]{Korkmaz2}). 

\begin{lem}\label{pushing1}
For a two-sided simple loop $\gamma $ on $N_{g-1}$ based at $x_0$, suppose that $\gamma _1$, $\gamma _2$ are two-sided simple closed curves on $N_{g-1}$ such that $\gamma _1\sqcup \gamma _2$ is the boundary of the regular neighborhood $N$ of $\gamma$ in $N_{g-1}$ whose interior contains $D$. Then for some orientation of $N$, we have
\[
\psi (\gamma )=\varphi (t_{\gamma _1}t_{\gamma _2}^{-1})=t_{\widetilde {\gamma _1}}t_{\widetilde {\gamma _2}}^{-1},
\] 
where $\widetilde {\gamma _1}$, $\widetilde {\gamma _2}$ are images of $\gamma _1$, $\gamma _2$ to $N_g$ by blowups respectively (See Figure~\ref{crosscap_def_twist}). 
\end{lem}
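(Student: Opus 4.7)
The plan is to reduce the statement to the classical point-pushing identity on a surface and then transport it through the blowup homomorphism $\varphi$.

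First, since $\gamma$ is two-sided, the regular neighborhood $N$ of $\gamma$ in $N_{g-1}$ is an annulus; fix one of its two orientations so that $t_{\gamma_1}$ and $t_{\gamma_2}$ have unambiguous meaning. The key input is the standard description of the point pushing map along a two-sided simple loop (proved in the references cited, in particular \cite[Lemma~2.2]{Korkmaz2}): pushing the basepoint $x_0$ once around $\gamma$ is isotopic, relative to $x_0$, to the diffeomorphism supported in $N$ which performs a positive twist on one boundary circle and a negative twist on the other. In symbols,
\[
j(\gamma)=t_{\gamma_1}t_{\gamma_2}^{-1}\quad \text{in }\mathcal{M}(N_{g-1},x_0),
\]
for the appropriate labeling of $\gamma_1,\gamma_2$. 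I would give this identity by picking an explicit isotopy of $N$ that rotates the core once; such a rotation fixes $\partial N$ pointwise and realizes the point pushing along $\gamma$, so its restriction to $N$ agrees (up to isotopy rel boundary) with $t_{\gamma_1}t_{\gamma_2}^{-1}$.

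Second, I would apply the blowup homomorphism $\varphi$. Since $D$ is contained in the interior of $N$ and $\gamma_1,\gamma_2=\partial N$ are disjoint from $D$, we may choose a representative of $t_{\gamma_1}t_{\gamma_2}^{-1}$ supported in a collar of $\partial N$, hence equal to the identity on $D$. This representative satisfies condition (a) in the definition of $\varphi$, so the induced diffeomorphism on $N_g$ is again a product of Dehn twists along the same curves, now viewed as curves $\widetilde{\gamma_1},\widetilde{\gamma_2}$ on $N_g$ via the blowup. Thus
\[
\varphi(t_{\gamma_1}t_{\gamma_2}^{-1})=t_{\widetilde{\gamma_1}}t_{\widetilde{\gamma_2}}^{-1}.
\]
Combining with the first step, $\psi(\gamma)=\varphi(j(\gamma))=t_{\widetilde{\gamma_1}}t_{\widetilde{\gamma_2}}^{-1}$, which is exactly the claim.

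The only real point of care is the sign convention: because $N_{g-1}$ is non-orientable, there is no canonical sign for the individual Dehn twists, which is why the statement is formulated only up to a choice of orientation of $N$. Once the orientation is fixed, the labeling of $\gamma_1$ and $\gamma_2$ that makes the identity $j(\gamma)=t_{\gamma_1}t_{\gamma_2}^{-1}$ hold is forced by the direction in which one pushes $x_0$ along $\gamma$. The rest of the argument is essentially a transport of a standard orientable-surface fact through $\varphi$, so I do not anticipate any further obstacle.
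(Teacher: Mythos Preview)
Your argument is correct and is precisely the approach the paper intends: the paper does not give a detailed proof but simply states that the lemma ``follow[s] from the description of the point pushing map (See \cite[Lemma~2.2, Lemma~2.3]{Korkmaz2}),'' and you have spelled out exactly that deduction --- the identity $j(\gamma)=t_{\gamma_1}t_{\gamma_2}^{-1}$ from Korkmaz, transported through $\varphi$ using a representative that is the identity on $D$.
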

Let $\mu $ be a one-sided simple closed curve and let $\alpha $ be a two-sided simple closed curve on $N_g$ such that $\mu $ and $\alpha $ intersect transversely at one point. For these simple closed curves $\mu$ and $\alpha $, we denote by $Y_{\mu , \alpha }$ a self-diffeomorphism on $N_g$ which is described as the result of pushing the regular neighborhood of $\mu $ once along $\alpha $.  We call $Y_{\mu , \alpha }$ a {\it Y-homeomorphism} (or {\it crosscap slide}). By Lemma~3.6 in \cite{Szepietowski1}, Y-homeomorphisms are in $\Gamma _2(N_g)$.
\begin{lem}\label{pushing2}
Suppose that $\gamma $ is a one-sided simple loop on $N_{g-1}$ based at $x_0$ such that $\gamma $ and $\partial D$ intersect at antipodal points of $\partial D$. Then we have
\[
\psi (\gamma )=Y_{\mu ,\widetilde {\gamma }},
\]
where  $\widetilde {\gamma }$ is a image of $\gamma $ to $N_g$ by a blowup and $\mu $ is a center line of the crosscap obtained from the regular neighborhood of $\partial D$ in $N_{g-1}$ by the blowup of $N_{g-1}$ on $D$ (See Figure~\ref{crosscap_def_y}).
\end{lem}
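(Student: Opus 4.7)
The plan is to exploit the factorization $\psi(\gamma)=\varphi(j(\gamma))$ and reduce to a local computation in a suitable regular neighborhood. Since $\gamma$ is a simple one-sided loop whose intersection with $\partial D$ consists of two antipodal points, I would choose a regular neighborhood $N$ of the circle $\gamma(S^1)$ in $N_{g-1}$ large enough to contain $D$ in its interior. One-sidedness of $\gamma$ guarantees that $N$ is diffeomorphic to a M\"obius band, with $\gamma$ isotopic to its core, and that $D$ sits in $N$ as a small disk centered at $x_0$, with $\partial D$ meeting $\gamma$ exactly at the prescribed antipodal pair.

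Next I would choose a representative of the point pushing map $j(\gamma)$ supported in $N$ and given there by the standard ``shift once around the core'' diffeomorphism of the M\"obius band, interpolated to the identity near $\partial N$. Applying the blowup on $D$ converts $N$ into a surface of Euler characteristic $-1$, non-orientable, with one boundary component $\partial N$; hence $N$ becomes a Klein bottle with an open disk removed. Under this blowup, the arc $\gamma\cap (N_{g-1}\setminus {\rm int}D)$ closes up, through the antipodal identification on $\partial D$, into a simple closed curve $\widetilde{\gamma}$ which is now two-sided, the extra orientation reversal coming from crossing the new crosscap canceling the one-sidedness of $\gamma$. By construction, $\mu$ is the center line of that crosscap and meets $\widetilde{\gamma}$ transversely in a single point, matching the defining data of $Y_{\mu,\widetilde{\gamma}}$.

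The final step is to identify $\varphi(j(\gamma))$ with $Y_{\mu,\widetilde{\gamma}}$. Both maps are supported in the blown-up $N$, a Klein-bottle-with-boundary neighborhood of $\mu\cup\widetilde{\gamma}$. Unwinding the blowup construction, the shift diffeomorphism of the M\"obius band, after antipodal identification on $\partial D$, realizes precisely the motion of dragging a regular neighborhood of $\mu$ once along $\widetilde{\gamma}$, which is exactly the topological description of the Y-homeomorphism. Since two diffeomorphisms of the ambient surface supported in a common compact subsurface and realizing the same pushing operation agree up to isotopy rel boundary, this gives the equality $\psi(\gamma)=Y_{\mu,\widetilde{\gamma}}$ in $\mathcal{M}(N_g)$.

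The main obstacle will be making the last identification rigorous at the smooth level: one must carefully pair the local model of the point-pushing diffeomorphism on the M\"obius band $N$ (before blowup) with the standard local model used to define $Y_{\mu,\widetilde{\gamma}}$ (after blowup), taking care that the antipodal identification on $\partial D$ is the correct bridge between the two. This is the non-orientable analogue of the argument sketched in \cite[Lemma~2.3]{Korkmaz2}, and I would follow that template, replacing the annular neighborhood of a two-sided loop used there by the M\"obius band neighborhood of the one-sided loop $\gamma$.
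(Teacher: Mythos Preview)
Your proposal is correct and matches the paper's approach: the paper does not give an independent proof of this lemma but simply states that it ``follows from the description of the point pushing map'' and cites \cite[Lemma~2.2, Lemma~2.3]{Korkmaz2}. Your sketch is exactly the natural unpacking of that citation---localize in a M\"obius band neighborhood of the one-sided loop, blow up to a one-holed Klein bottle, and identify the induced push with the Y-homeomorphism---so there is nothing to add.
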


\begin{figure}[h]
\includegraphics[scale=0.59]{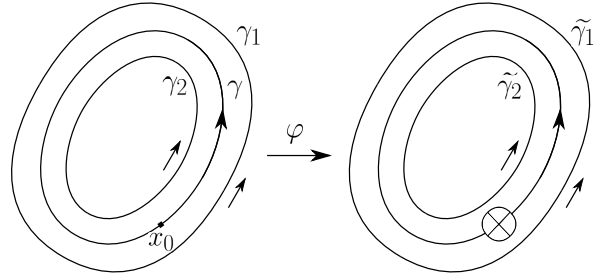}
\caption{A crosscap pushing map along two-sided simple loop $\gamma$.}\label{crosscap_def_twist}
\end{figure}

\begin{figure}[h]
\includegraphics[scale=0.59]{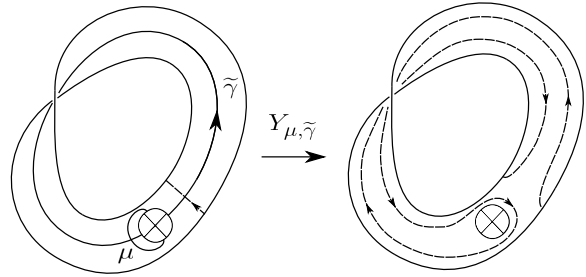}
\caption{A crosscap pushing map along one-sided simple loop $\gamma$ (Y-homeomorphism $Y_{\mu ,\widetilde {\gamma }}$).}\label{crosscap_def_y}
\end{figure}

Remark that the image of a crosscap pushing map is contained in $\Gamma _2(N_g)$. By Lemma~\ref{pushing1}, if $\gamma $ is a two-sided simple loop on $N_g$, then $\psi (\gamma )$ is an element of $\mathcal{T}_2(N_g)$. We remark that Y-homeomorphisms are not in $\mathcal{T}(N_g)$ (See~\cite{Lickorish2}).  

\subsection{Notation of the surface $N_g$}\label{surf_notation}

Let $e_i:D_i^\prime \hookrightarrow \Sigma _0$ for $i=1$, $2, \dots $, $g$ be smooth embeddings of unit disks $D_i^\prime \subset \mathbb C$ to a 2-sphere $\Sigma _0$ such that $D_i:=e_i(D_i^\prime )$ and $D_j$ are disjoint for distinct $1\leq i,j\leq g$, and let $x_i\in \Sigma _0$ for $i=1$, $2, \dots $, $g$ be $g$ points of $\Sigma _0$ such that $x_i$ is contained in the interior of $D_i$ as the left-hand side of Figure~\ref{nonorisurf}. Then $N_g$ is diffeomorphic to the surface obtained from $\Sigma _0$
by the blowups on $D_1,\dots ,D_g$. We describe the identification of $\partial D_i$ by the x-mark as the right-hand side of Figure~\ref{nonorisurf}. 
We call 
the crosscap which is obtained from the regular neighborhood of $\partial D_i$ in $\Sigma _0$ by the blowup of $\Sigma _0$ on $D_i$ the {\it $i$-th crosscap}.

We denote by $N_{g-1}^{(k)}$ the surface obtained from $\Sigma _0$
by the blowups on $D_i$ for every $i\not=k$. $N_{g-1}^{(k)}$ is diffeomorphic to $N_{g-1}$. Let $x_{k;i}$ be a simple loop on $N_g$ based at $x_k$ for $i\not=k$ as Figure~\ref{loop_x_ki}. Then the fundamental group $\pi _1(N_{g-1}^{(k)})=\pi _1(N_{g-1}^{(k)},x_k)$ of $N_{g-1}^{(k)}$ has the following presentation.
\[
\pi _1(N_{g-1}^{(k)})=\bigl< x_{k;1}, \dots , x_{k;k-1}, x_{k;k+1}, \dots , x_{k;g} \mid  x_{k;1}^2\dots x_{k;k-1}^2x_{k;k+1}^2\dots x_{k;g}^2=1\bigr>. 
\]

\begin{figure}[h]
\includegraphics[scale=0.80]{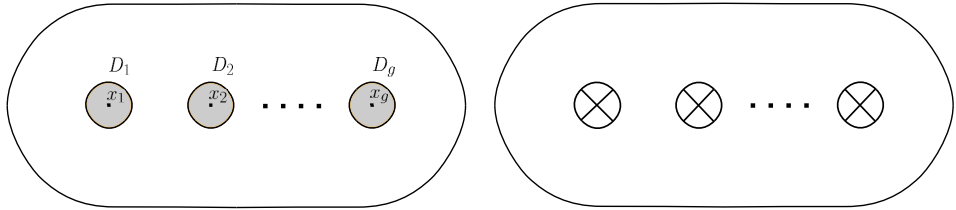}
\caption{The embedded disks $D_1$, $D_2$, $\dots$, $D_g$ on $\Sigma _0$ and the surface $N_g$.}\label{nonorisurf}
\end{figure}

\begin{figure}[h]
\includegraphics[scale=0.80]{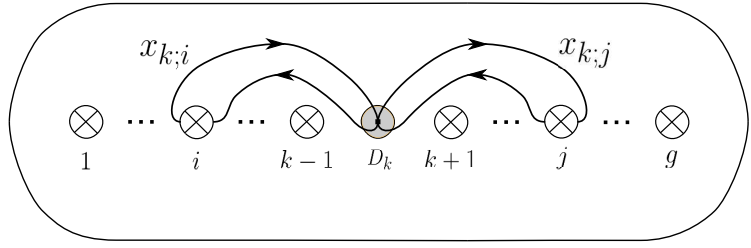}
\caption{The simple loop $x_{k;i}$ for $1\leq i\leq k-1$ and $x_{k;j}$ for $k+1\leq j\leq g$ on $N_{g-1}^{(k)}$ based at $x_k$.}\label{loop_x_ki}
\end{figure}

\subsection{Notations of mapping classes}

Let $\psi _k:\pi _1(N_{g-1}^{(k)})\rightarrow \mathcal{M}(N_g)$ be the crosscap pushing map obtained from 
the blowup of $N_{g-1}^{(k)}$ on $D_k$ and let $\pi _1(N_{g-1}^{(k)})^+$ be the subgroup of $\pi _1(N_{g-1}^{(k)})$ generated by two-sided simple loops on $N_{g-1}^{(k)}$ based at $x_k$. By Lemma~\ref{pushing1}, we have $\psi _k(\pi _1(N_{g-1}^{(k)})^+)\subset \mathcal{T}_2(N_g)$. We define non-separating two-sided simple loops $\alpha _{k;i,j}$ and $\beta _{k;i,j}$ on $N_{g-1}^{(k)}$ based at $x_k$ as in Figure~\ref{loops_alpha_beta} for distinct $1\leq i<j\leq g$ and $1\leq k\leq g$. We also define $\alpha _{k;j,i}:=\alpha _{k;i,j}$ and $\beta _{k;j,i}:=\beta _{k;i,j}$ for distinct $1\leq i<j\leq g$ and $1\leq k\leq g$. We have the following equations:
\begin{align*}
\alpha _{k;i,j}=x_{k;i}x_{k;j}  & \hspace{1cm} \text{for } i<j<k \text{ or }\ j<k<i \text{ or }\ k<i<j, \\
\beta _{k;i,j} =x_{k;j}x_{k;i}  & \hspace{1cm} \text{for } i<j<k \text{ or }\ j<k<i \text{ or }\ k<i<j.  
\end{align*}
Denote the crosscap pushing maps $a_{k;i,j}:=\psi _k(\alpha _{k;i,j})$ and $b_{k;i,j}:=\psi _k(\beta _{k;i,j})$. Remark that $a_{k;i,j}$ and $b_{k;i,j}$ are contained in the image of $\psi _k|_{\pi _1(N_{g-1}^{(k)})^+}$. Let $\eta $ be the self-diffeomorphism on $N_g$ which is the rotation of $N_g$ such that $\eta $ sends the $i$-th crosscap to the $(i+1)$-st crosscap for $1\leq i\leq g-1$ and the $g$-th crosscap to the $1$-st crosscap as Figure~\ref{rotation}. Then we have $a_{k;i,j}=\eta ^{k-1}a_{1;i-k+1,j-k+1}\eta ^{-(k-1)}$ and $b_{k;i,j}=\eta ^{k-1}b_{1;i-k+1,j-k+1}\eta ^{-(k-1)}$ for each distinct $1\leq i, j, k\leq g$.

\begin{figure}[h]
\includegraphics[scale=0.65]{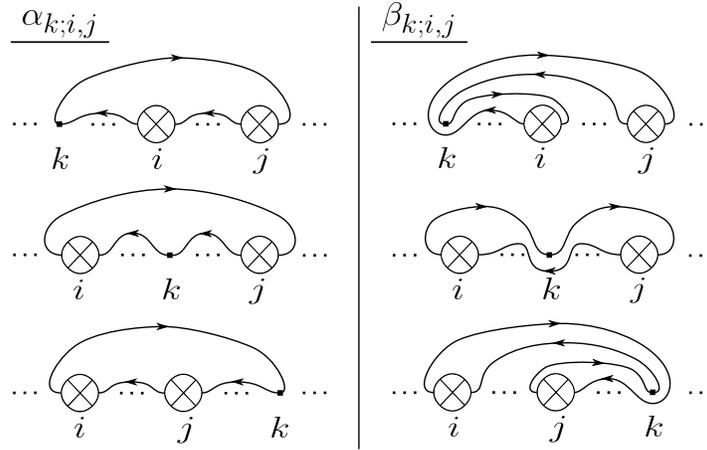}
\caption{Two-sided simple loops $\alpha _{k;i,j}$ and $\beta _{k;i,j}$ on $N_{g-1}^{(k)}$ based at $x_k$.}\label{loops_alpha_beta}
\end{figure}

\begin{figure}[h]
\includegraphics[scale=0.65]{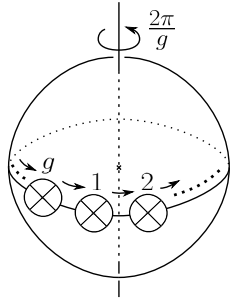}
\caption{The self-diffeomorphism $\eta $ on $N_g$.}\label{rotation}
\end{figure}

For distinct $i_1, i_2, \dots , i_n\in \{1, 2, \dots , g\}$, we define a simple closed curve $\alpha _{i_1,i_2,\dots ,i_n}$ on $N_g$ as in Figure~\ref{scc_gammai1i2in}. The arrow on the side of the simple closed curve $\alpha _{i_1,i_2,\dots ,i_n}$ in Figure~\ref{scc_gammai1i2in} indicates the direction of the Dehn twist $t_{\alpha _{i_1,i_2,\dots ,i_n}}$ along $\alpha _{i_1,i_2,\dots ,i_n}$ if $n$ is even. We set
the notations of Dehn twists and Y-homeomorphisms as follows:

\begin{eqnarray*}
T_{i,j} &:=& t_{\alpha _{i,j}}   \hspace{1cm} \text{for } 1\leq i<j\leq g, \\
T_{i,j,k,l} &:=& t_{\alpha _{i,j,k,l}}   \hspace{1cm} \text{for } g\geq 4  \text{ and }1\leq i<j<k<l\leq g,\\
Y_{i,j} &:=& Y_{\alpha _{i},\alpha _{i,j}}=\psi _i(x_{i;j})  \hspace{1cm} \text{for } \text{ distinct } 1\leq i, j\leq g.  
\end{eqnarray*}
Note that $T_{i,j}^2$ and $T_{i,j,k,l}^2$ are elements of $\mathcal{T}_2(N_g)$, $Y_{i,j}$ is an element of $\Gamma _2(N_g)$ but $Y_{i,j}$ is not an element of $\mathcal{T}_2(N_g)$. We remark that $a_{k;i,j}=b_{k;i,j}^{-1}=T_{i,j}^2$ for any distinct $i, j, k\in \{1, 2, 3\}$ when $g=3$.

\begin{figure}[h]
\includegraphics[scale=0.65]{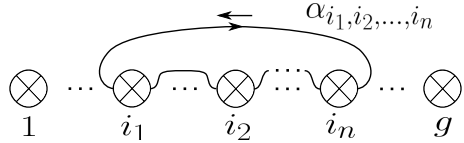}
\caption{The simple closed curve $\alpha _{i_1,i_2,\dots ,i_n}$ on $N_g$.}\label{scc_gammai1i2in}
\end{figure}


\section{Finite generating set for $\mathcal{T}_2(N_g)$}\label{section-finite-gen}

In this section, we prove the main theorem in this paper. The main theorem 
is as follows:

\begin{thm}\label{main-thm}
For $g\geq 3$, $\mathcal{T}_2(N_g)$ is generated by the following elements:
\begin{enumerate}
 \item[(i)] $a_{k;i,i+1}$, $b_{k;i,i+1}$, $a_{k;k-1,k+1}$, $b_{k;k-1,k+1}$ \ for $1\leq k\leq g$, $1\leq i\leq g$ and $i\not=k-1, k$,
 \item[(ii)] $a_{1;2,4}$, $b_{k;1,4}$, $a_{l;1,3}$ \ for $k=2, 3$ and $4\leq l\leq g$ when $g$ is odd,
 \item[(iii)] $T_{1,j,k,l}^2$ \ for $2\leq j< k< l\leq g$ when $g\geq 4$,
\end{enumerate} 
where the indices are considered modulo $g$. 
\end{thm}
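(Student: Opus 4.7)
The plan is to exploit the short exact sequence
\[
1 \to \mathcal{T}_2(N_g) \to \mathcal{T}(N_g) \xrightarrow{\rho} \operatorname{Aut}(H_1(N_g;\mathbb{Z}_2),\cdot) \to 1,
\]
whose surjectivity was proved by McCarthy--Pinkall, combined with a known finite generating set for $\mathcal{T}(N_g)$. By Chillingworth's theorem (and subsequent refinements) one may take $\mathcal{T}(N_g)$ to be generated by the Dehn twists $T_{i,j}$ along the curves $\alpha_{i,j}$ for $1\le i<j\le g$ together with, when $g\ge 4$, the separating twists $T_{i,j,k,l}$, all related to one another by conjugation with the rotation $\eta$ and with products of the $T_{i,j}$'s.

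The first main step is to compute $\rho$ on generators: $\rho(T_{i,j})$ is the transvection on $H_1(N_g;\mathbb{Z}_2)$ associated with $[\alpha_{i,j}]$, while $\rho(T_{i,j,k,l}) = \id$ since $\alpha_{i,j,k,l}$ bounds an orientable subsurface and so is null-homologous mod $2$. I would then fix a finite Schreier transversal $\{s_\sigma\}$ of $\mathcal{T}_2(N_g)$ in $\mathcal{T}(N_g)$ whose elements are short words in the $T_{i,j}$'s, and apply the Reidemeister--Schreier procedure to obtain a finite generating set of $\mathcal{T}_2(N_g)$ consisting of elements of the form $s_\sigma\, T\, s_{\sigma\rho(T)}^{-1}$. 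Lemma~\ref{pushing1} is the crucial translation device: a product $T_{i,j}T_{i',j'}^{-1}$ whose two curves cobound a once-punctured M\"obius band based at some crosscap $k$ is precisely a crosscap pushing map $a_{k;\cdot,\cdot}^{\pm 1}$ or $b_{k;\cdot,\cdot}^{\pm 1}$, while the generators arising from $T = T_{i,j,k,l}$ are conjugates of $T_{i,j,k,l}^{\pm 1}$ which, after re-conjugation by $\eta$ and by $T_{i,j}$'s, reduce to the squares $T_{1,j,k,l}^2$ of (iii).

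The final and most delicate step is to reduce this large Reidemeister--Schreier generating set to the explicit list in (i)--(iii). I would use the conjugation identities $a_{k;i,j} = \eta^{k-1} a_{1;i-k+1,j-k+1} \eta^{-(k-1)}$ (and the analogous ones for the $b$'s), together with braid and chain relations among the $T_{i,j}$'s, to cut the collection of crosscap pushing maps down to those with indices of the form $(k;i,i+1)$ or $(k;k-1,k+1)$, and to reduce the separating-curve squares to the family $T_{1,j,k,l}^2$. The main obstacle will be the parity dichotomy in (ii): when $g$ is odd, the image $\rho(\mathcal{T}(N_g))$ has a coset structure in which certain orthogonal transformations on $H_1(N_g;\mathbb{Z}_2)$ are not realized as products of the transvections $\rho(T_{i,i+1})$ and $\rho(T_{k-1,k+1})$ alone, so the corresponding Schreier generators cannot be expressed using (i) only; the additional pushing maps $a_{1;2,4}$, $b_{k;1,4}$, $a_{l;1,3}$ in (ii) are precisely what closes this gap. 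Making this parity analysis rigorous and producing the explicit word decompositions that express every omitted $a_{k;i,j}$, $b_{k;i,j}$, and $T_{i,j,k,l}^2$ in terms of the listed generators will be the principal technical labor of the proof.
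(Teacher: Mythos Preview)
Your plan contains a concrete error and, more importantly, chooses the wrong exact sequence, which turns a two-line Reidemeister--Schreier computation into an intractable one.

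First, the error: $\alpha_{i,j,k,l}$ is not null-homologous mod $2$. In the basis $[\alpha_1],\dots,[\alpha_g]$ of $H_1(N_g;\mathbb Z_2)$ one has $[\alpha_{i,j,k,l}]=[\alpha_i]+[\alpha_j]+[\alpha_k]+[\alpha_l]\neq 0$, so $\rho(T_{i,j,k,l})$ is a nontrivial transvection of order $2$. Consequently the Schreier generators coming from $T_{i,j,k,l}$ are not conjugates of $T_{i,j,k,l}$, and your explanation of how the squares $T_{1,j,k,l}^2$ emerge breaks down. (The curve $\alpha_{i,j,k,l}$ is also non-separating, not separating.)

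Second, and more structurally, the paper does \emph{not} run Reidemeister--Schreier through $\mathcal{T}(N_g)$. The quotient $\mathcal{T}(N_g)/\mathcal{T}_2(N_g)\cong \operatorname{Aut}(H_1(N_g;\mathbb Z_2),\cdot)$ has order exponential in $g$, so a Schreier transversal of ``short words in the $T_{i,j}$'' would be enormous and the resulting generating set hopeless to reduce by hand. Instead the paper observes that $\mathcal{T}_2(N_g)=\Gamma_2(N_g)\cap\mathcal{T}(N_g)$ has index $2$ in $\Gamma_2(N_g)$, and applies Reidemeister--Schreier to the sequence
\[
1 \longrightarrow \mathcal{T}_2(N_g) \longrightarrow \Gamma_2(N_g) \longrightarrow \mathbb Z_2 \longrightarrow 0
\]
with transversal $\{1,Y_{1,2}\}$. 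Since Hirose--Sato/Szepietowski already give a small generating set for $\Gamma_2(N_g)$ in terms of $Y_{i,j}$ and $T_{1,j,k,l}^2$, this immediately yields the generators $Y_{i,j}Y_{1,2}$, $Y_{i,j}^2$, $T_{1,j,k,l}^2$, and $Y_{1,2}^{-1}T_{1,j,k,l}^2Y_{1,2}$ for $\mathcal{T}_2(N_g)$. The remaining work is to show that each of these lies in the group $\mathcal{G}$ generated by (i)--(iii); this is done by first proving that $\pi_1(N_{g-1}^{(k)})^+$ is generated by the loops underlying (i) and (ii) (via the orientation double cover), so that every crosscap pushing map along a two-sided loop is available, and then expressing $Y_{i,j}^2$, $T_{i,j}^2$, $Y_{k,l}Y_{i,j}$, and $Y_{1,2}^{-1}T_{1,j,k,l}^2Y_{1,2}$ as products of such pushing maps and of the $T_{1,j,k,l}^2$. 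The parity dichotomy in (ii) arises not from the coset structure of $\rho$, but from the generating set of $\pi_1(N_{g-1}^{(k)})^+$: when $g-1$ is even one extra loop is needed to express $x_{k;1}^2$.
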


We remark that the number of generators in Theorem~\ref{main-thm} is $\frac{1}{6}(g^3+6g^2+5g-6)$ for $g\geq 4$ odd, $\frac{1}{6}(g^3+6g^2-g-6)$ for $g\geq 4$ even and $3$ for $g=3$.

\subsection{Finite generating set for $\pi _1(N_{g-1}^{(k)})^+$}
First, we have the following lemma:

\begin{lem}\label{index2_1}
For $g\geq 2$, $\pi _1(N_{g-1}^{(k)})^+$ is an index 2 subgroup of $\pi _1(N_{g-1}^{(k)})$.
\end{lem}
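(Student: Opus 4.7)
The plan is to identify $\pi _1(N_{g-1}^{(k)})^+$ with the kernel of the orientation character $w\colon \pi _1(N_{g-1}^{(k)}) \to \mathbb Z_2$. Since $w$ will be surjective, $\ker w$ has index $2$, and the lemma will follow once the equality $\pi _1(N_{g-1}^{(k)})^+ = \ker w$ is established.

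I would first define $w$ on the presentation of $\pi _1(N_{g-1}^{(k)})$ by $w(x_{k;i}) = 1$ for every $i \neq k$. This is well-defined because the single defining relator has exponent sum $2(g-1)$, which is even. Topologically $w$ is the orientation character: it vanishes on a loop exactly when the loop admits an orientable (annular) tubular neighborhood, and each generator $x_{k;i}$ passes through one crosscap and is therefore one-sided, hence mapped to $1$. In particular, every two-sided simple loop lies in $\ker w$, yielding the inclusion $\pi _1(N_{g-1}^{(k)})^+ \subseteq \ker w$.

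For the reverse inclusion, an element of $\ker w$ is represented by a word $x_{k;i_1}^{\epsilon _1} \cdots x_{k;i_n}^{\epsilon _n}$ of even length $n$. I would group consecutive letters into $n/2$ pairs and check that each pair already lies in $\pi _1(N_{g-1}^{(k)})^+$. When the two indices of a pair differ ($i \neq j$), the loops $\alpha _{k;i,j}$ and $\beta _{k;i,j}$ of Figure~\ref{loops_alpha_beta} are two-sided simple loops based at $x_k$ representing $x_{k;i}x_{k;j}$ and $x_{k;j}x_{k;i}$ (the exact assignment depending on the cyclic position of $k$ relative to $i,j$). The mixed-sign pairs $x_{k;i} x_{k;j}^{-1}$ and $x_{k;i}^{-1} x_{k;j}$ are reduced to these via $x_{k;i} x_{k;j}^{-1} = (x_{k;i} x_{k;j}) \cdot x_{k;j}^{-2}$ and $x_{k;i}^{-1} x_{k;j} = (x_{k;j}^{-1} x_{k;i})^{-1}$, provided $x_{k;j}^{\pm 2} \in \pi _1(N_{g-1}^{(k)})^+$. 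When the two indices coincide, the boundary of a small M\"{o}bius band tubular neighborhood of the one-sided loop $x_{k;j}$ is a two-sided simple closed curve freely homotopic to $x_{k;j}^2$, and joining it to $x_k$ along an embedded arc meeting the boundary only at one endpoint realizes $x_{k;j}^2$ by a two-sided simple loop based at $x_k$; hence $x_{k;j}^{\pm 2} \in \pi _1(N_{g-1}^{(k)})^+$, completing the reverse inclusion.

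The main obstacle is this realization step: one has to verify that the two-sided simple closed curves parallel to $\alpha _{k;i,j}$, $\beta _{k;i,j}$, and to the boundaries of tubular neighborhoods of $x_{k;j}$, can indeed be based at $x_k$ as genuine simple loops representing the claimed conjugacy classes in $\pi _1(N_{g-1}^{(k)})$. Once this geometric check is in hand, the rest is a formal word-length computation in the given presentation.
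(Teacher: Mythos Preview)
Your argument is correct and follows essentially the same route as the paper. Both proofs rest on the facts that $x_{k;i}x_{k;j}$ (realized by $\alpha_{k;i,j}$ or $\beta_{k;i,j}$) and $x_{k;j}^2$ (realized by the boundary of a crosscap neighborhood) lie in $\pi_1(N_{g-1}^{(k)})^+$; the paper uses these to show the quotient $\pi_1(N_{g-1}^{(k)})/\pi_1(N_{g-1}^{(k)})^+$ is cyclic of order at most $2$ and then asserts the order is exactly $2$, while you package the same ingredients as the inclusion $\ker w \subseteq \pi_1(N_{g-1}^{(k)})^+$ and make the nontriviality of the quotient explicit via the orientation character $w$.
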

\begin{proof}
Note that $\pi _1(N_{g-1}^{(k)})$ is generated by $x_{k;1}, \dots , x_{k;k-1}, x_{k;k+1}, \dots , x_{k;g}$. If $g=2$, $\pi _1(N_{g-1}^{(k)})$ is isomorphic to $\mathbb Z_2$ which is generated by a one-sided simple loop. Hence $\pi _1(N_{g-1}^{(k)})^+$ is trivial and we obtain this lemma when $g=2$.

We assume that $g\geq 3$. For $i\not=k$, we have
\[
x_{k;i}=x_{k;k-1}^{-1}\cdot x_{k;k-1}x_{k;i}.
\]
Since $x_{k;k-1}x_{k;i}=\beta _{k;i,k-1}\in \pi _1(N_{g-1}^{(k)})^+$, the equivalence classes of $x_{k;i}$ and $x_{k;k-1}^{-1}$ in $\pi _1(N_{g-1}^{(k)})/\pi _1(N_{g-1}^{(k)})^+$ are the same. We also have
\[
x_{k;k-1}=x_{k;k-1}^{-1}\cdot x_{k;k-1}^2.
\]
Since $x_{k;k-1}^2\in \pi _1(N_{g-1}^{(k)})^+$, the equivalence classes of $x_{k;k-1}$ and $x_{k;k-1}^{-1}$ in $\pi _1(N_{g-1}^{(k)})/\pi _1(N_{g-1}^{(k)})^+$ are the same. Thus $\pi _1(N_{g-1}^{(k)})/\pi _1(N_{g-1}^{(k)})^+$ is generated by the equivalence class $[x_{k;k-1}]$ whose order is 2 and we have completed the proof of Lemma~\ref{index2_1}.
\end{proof}

$N_{g-1}^{(k)}$ is diffeomorphic to the surface on the left-hand side (resp. right-hand side) of Figure~\ref{nonorisurf4} when $g-1=2h+1$ (resp. $g-1=2h+2$). We take a diffeomorphism which sends $x_{k;i}$ for $i\not=k$ and $x_k$ as in Figure~\ref{loop_x_ki} to $x_{k;i}$ for $i\not=k$ and $x_k$ as in Figure~\ref{nonorisurf4} and identify $N_{g-1}^{(k)}$ with the surface in Figure~\ref{nonorisurf4} by the diffeomorphism. Denote by $p_k:\widetilde{N_{g-1}^{(k)}}\twoheadrightarrow N_{g-1}^{(k)}$ the orientation double covering of $N_{g-1}^{(k)}$ as in Figure~\ref{orientation_cov1}. Then $H_k:=(p_k)_\ast (\pi _1(\widetilde{N_{g-1}^{(k)}}))$ is an index 2 subgroup of $\pi _1(N_{g-1}^{(k)})$. Note that when $g-1=2h+1$, $\pi _1(\widetilde{N_{g-1}^{(k)}})$ is generated by $y_{k;i}$ for $1\leq i\leq 4h$, and when $g-1=2h+2$, $\pi _1(\widetilde{N_{g-1}^{(k)}})$ is generated by $y_{k;i}$ for $1\leq i\leq 4h+2$, where $y_{k;i}$ is two-sided simple loops on $\widetilde{N_{g-1}^{(k)}}$ based at the lift $\widetilde{x_k}$ of $x_k$ as in Figure~\ref{orientation_cov1}. We have the following Lemma.

\begin{figure}[h]
\includegraphics[scale=1.2]{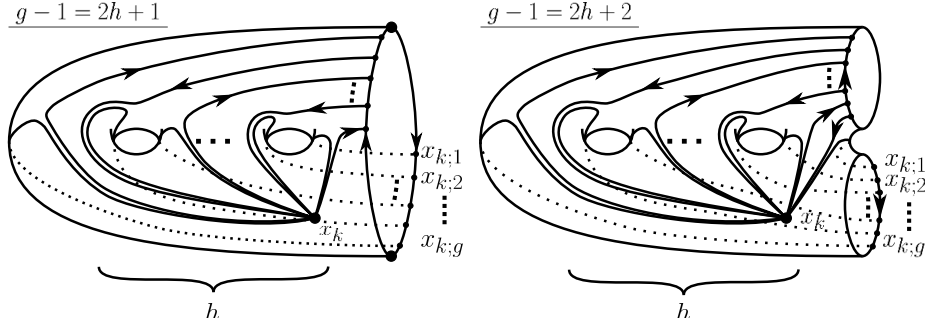}
\caption{$N_{g-1}^{(k)}$ is diffeomorphic to the surface on the left-hand side (resp. right-hand side) of the figure when $g-1=2h+1$ (resp. $g-1=2h+2$). We regard the above surface on the left-hand side as the surface identified antipodal points of the boundary component, and the above surface on the right-hand side as the surface attached their boundary components along the orientation of the boundary.}\label{nonorisurf4}
\end{figure}

\begin{lem}\label{index2_2}
For $g-1\geq 1$ and $1\leq k\leq g$,
\[
H_k=\pi _1(N_{g-1}^{(k)})^+.
\]
\end{lem}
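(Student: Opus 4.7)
The plan is to identify both $H_k$ and $\pi _1(N_{g-1}^{(k)})^+$ with the kernel of the orientation homomorphism
\[
w_1 \colon \pi _1(N_{g-1}^{(k)}) \longrightarrow \mathbb Z _2,
\]
which sends a loop $\gamma $ to $0$ if a regular neighborhood of $\gamma $ in $N_{g-1}^{(k)}$ is orientable (an annulus) and to $1$ otherwise (a M\"{o}bius band). By the very definition of the orientation double cover, $p_k$ is the connected double cover classified by $w_1$: a loop in $N_{g-1}^{(k)}$ lifts to a closed loop in $\widetilde{N_{g-1}^{(k)}}$ precisely when it is orientation-preserving. Hence $H_k=\ker w_1$, and in particular $H_k$ has index $2$ in $\pi _1(N_{g-1}^{(k)})$.

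First I would check the inclusion $\pi _1(N_{g-1}^{(k)})^+\subseteq H_k$. Since $\pi _1(N_{g-1}^{(k)})^+$ is by definition generated by two-sided simple loops based at $x_k$, and any such loop has an annular regular neighborhood (so $w_1$-value $0$), every generator lies in $\ker w_1=H_k$. Then I would invoke Lemma~\ref{index2_1}, which asserts that $\pi _1(N_{g-1}^{(k)})^+$ also has index $2$ in $\pi _1(N_{g-1}^{(k)})$. Since an index-$2$ subgroup contained in another index-$2$ subgroup must coincide with it, this forces $\pi _1(N_{g-1}^{(k)})^+=H_k$.

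I do not expect a real obstacle: the two nontrivial ingredients---the identification $H_k=\ker w_1$ and the index-$2$ calculation for $\pi _1(N_{g-1}^{(k)})^+$---are respectively standard covering-space theory for non-orientable surfaces and the already-established Lemma~\ref{index2_1}. The only mild point is that the generating set of $\pi _1(N_{g-1}^{(k)})^+$ must be tested against $w_1$ using the geometric (simple-loop) description, which is precisely how $\pi _1(N_{g-1}^{(k)})^+$ has been defined, so this causes no trouble.
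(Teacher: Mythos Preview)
Your argument is correct but proceeds differently from the paper. You show the inclusion $\pi _1(N_{g-1}^{(k)})^+\subseteq H_k$ by the conceptual identification $H_k=\ker w_1$, then invoke Lemma~\ref{index2_1} to force equality. The paper instead proves the \emph{opposite} inclusion $H_k\subseteq \pi _1(N_{g-1}^{(k)})^+$ by explicitly computing the images $(p_k)_\ast (y_{k;i})$ of a standard generating set $\{y_{k;i}\}$ for $\pi _1(\widetilde{N_{g-1}^{(k)}})$, checking case by case that each lands among the elements $x_{k;j+1}x_{k;j}$, $x_{k;j}x_{k;j+1}$, $x_{k;k\pm 1}x_{k;k\mp 1}$, or $x_{k;1}^2$ (resp.\ $x_{k;2}^2$), all of which are visibly two-sided; it then uses the same index-$2$ count (via Lemma~\ref{index2_1}) to upgrade the inclusion to an equality.

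Your route is shorter and more conceptual for the bare statement of the lemma. However, the paper's explicit computation is not wasted effort: the very next result, Proposition~\ref{gen1_pi_1plus}, is stated as a direct consequence of that computation (``By the proof of Lemma~\ref{index2_2}\dots''), since the list of images $(p_k)_\ast (y_{k;i})$ furnishes an explicit finite generating set for $\pi _1(N_{g-1}^{(k)})^+$. Your argument establishes the equality $H_k=\pi _1(N_{g-1}^{(k)})^+$ cleanly, but you would still need to carry out the paper's generator computation (or some equivalent) to obtain Proposition~\ref{gen1_pi_1plus} and hence Corollaries~\ref{cor2} and~\ref{cor1}, which drive the rest of the main theorem.
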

\begin{proof}
Note that $\pi _1(N_{g-1}^{(k)})^+$ is an index 2 subgroup of $\pi _1(N_{g-1}^{(k)})$ by Lemma~\ref{index2_1}. It is sufficient for proof of Lemma~\ref{index2_2} to prove $H_k\subset \pi _1(N_{g-1}^{(k)})^+$ because the index of $H_k$ in $\pi _1(N_{g-1}^{(k)})$ is 
\begin{align*}
2&=[\pi _1(N_{g-1}^{(k)}):H_k]=[\pi _1(N_{g-1}^{(k)}):\pi _1(N_{g-1}^{(k)})^+][\pi _1(N_{g-1}^{(k)})^+:H_k]\\
&=2\cdot [\pi _1(N_{g-1}^{(k)})^+:H_k]
\end{align*}
if $H_k\subset \pi _1(N_{g-1}^{(k)})^+$.

We define subsets of $\pi _1(N_{g-1}^{(k)})^+$ as follows:
\begin{eqnarray*}
A&:=&\{ x_{k;j+1}x_{k;j},x_{k;k+1}x_{k;k-1} \mid 1\leq j\leq g-1,\ j\not=k-1,k\},\\
B&:=&\{ x_{k;j}x_{k;j+1},x_{k;k-1}x_{k;k+1} \mid 1\leq j\leq g-1,\ j\not=k-1,k\},\\
C&:=& \left\{ \begin{array}{ll}
 \{ x_{k;1}^2 \}&\text{if} \ k\not= 1,   \\
 \{ x_{k;2}^2 \}&\text{if} \ k=1.
 \end{array} \right.
\end{eqnarray*} 
$\pi _1(\widetilde{N_{g-1}^{(k)}})$ is generated by $y_{k;i}$. For $i\leq 2h$ when $g-1=2h+1$ (resp. $i\leq 2h+1$ when $g-1=2h+2$), we can check that
\begin{eqnarray*}
(p_k)_\ast (y_{k;i})&=&\left\{ \begin{array}{ll}
 x_{k;\rho (i+1)}x_{k;\rho (i)} &\text{if} \ 2\leq i\leq 2h\text{ and } g-1=2h+1,\\
 x_{k;g}x_{k;g-1} &\text{if} \ i=1\text{ and } g-1=2h+1,\\ 
 x_{k;\rho (i+1)}x_{k;\rho (i)} &\text{if} \ 2\leq i\leq 2h+1\text{ and } g-1=2h+2,\\
 x_{k;g}x_{k;g-1} &\text{if} \ i=1\text{ and } g-1=2h+2,  
 \end{array} \right. \\
\end{eqnarray*}
and $(p_k)_\ast (y_{k;i})$ is an element of $A$, where $\rho $ is the order reversing bijection from $\{ 1,2,\dots ,2h\}$ (resp. $\{ 1,2,\dots ,2h+1\}$) to $\{ 1,2,\dots ,g-1\} -\{ k\}$. Since if $g-1=2h+1$ and $g-1=2h^\prime +2$, we have
\begin{eqnarray*}
(p_k)_\ast (y_{k;2h+1})&=&\left\{ \begin{array}{ll}
 x_{k;1}^2 &\text{if} \ k\not= 1,\\
 x_{k;2}^2 &\text{if} \ k=1, 
 \end{array} \right. \\
(p_k)_\ast (y_{k;2h^\prime +2})&=&\left\{ \begin{array}{ll}
 x_{k;1}^2 &\text{if} \ k\not= 1,\\
 x_{k;2}^2 &\text{if} \ k=1, 
 \end{array} \right. \\
\end{eqnarray*}
$(p_k)_\ast (y_{k;2h+1})$ and $(p_k)_\ast (y_{k;2h^\prime +2})$ are elements of $C$ respectively (See Figure~\ref{loopx_k1^2}). Finally, for $i\geq 2h+2$ when $g-1=2h+1$ (resp. $i\geq 2h+3$ when $g=2h+2$), we can also check that
\begin{eqnarray*}
(p_k)_\ast (y_{k;i})&=&\left\{ \begin{array}{ll}
 x_{k;\rho ^\prime (i)}x_{k;\rho ^\prime (i+1)} &\text{if} \ 2h+2\leq i\leq 4h-1\text{ and } g-1=2h+1,\\
 x_{k;g-1}x_{k;g} &\text{if} \ i=4h\text{ and } g-1=2h+1,\\ 
 x_{k;\rho ^\prime (i)}x_{k;\rho ^\prime (i+1)} &\text{if} \ 2h+3\leq i\leq 4h+1\text{ and } g-1=2h+2,\\
 x_{k;g-1}x_{k;g} &\text{if} \ i=4h+2\text{ and } g-1=2h+2,  
 \end{array} \right. \\
\end{eqnarray*}
and $(p_k)_\ast (y_{k;i})=x_{k;\rho ^\prime (i)}x_{k;\rho ^\prime (i+1)}$ is an element of $B$, where $\rho ^\prime $ is the order preserving bijection from $\{ 2h+2,2h+3,\dots ,4h\}$ (resp. $\{ 2h+3,2h+4,\dots ,4h+2\}$) to $\{ 1,2,\dots ,g-1\} -\{ k\}$ (See Figure~\ref{loopx_kjx_kj+1}). We obtain this lemma.

\end{proof}

\begin{figure}[h]
\includegraphics[scale=1.4]{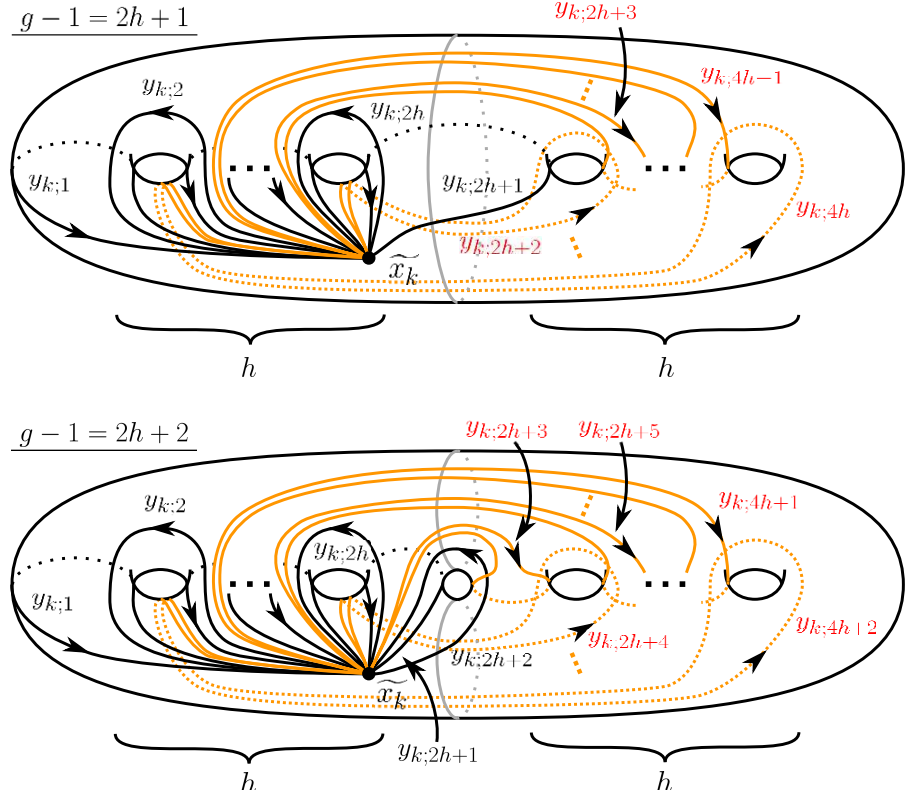}
\caption{The total space $\widetilde{N_{g-1}^{(k)}}$ of the orientation double covering $p_k$ of $N_{g-1}^{(k)}$ and two-sided simple loops $y_{k;i}$ on $\widetilde{N_{g-1}^{(k)}}$ based at $\widetilde{x_k}$.}\label{orientation_cov1}
\end{figure}

\begin{figure}[h]
\includegraphics[scale=1.0]{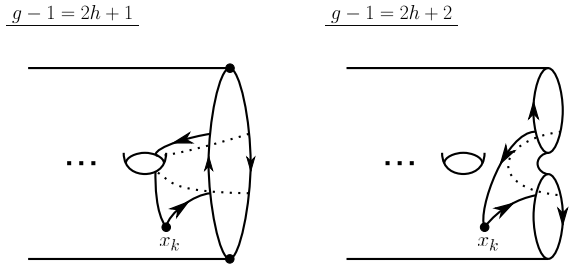}
\caption{The representative of $x_{k;1}^2$ when $k\not=1$ or $x_{k;2}^2$ when $k=1$.}\label{loopx_k1^2}
\end{figure}

\begin{figure}[h]
\includegraphics[scale=1.0]{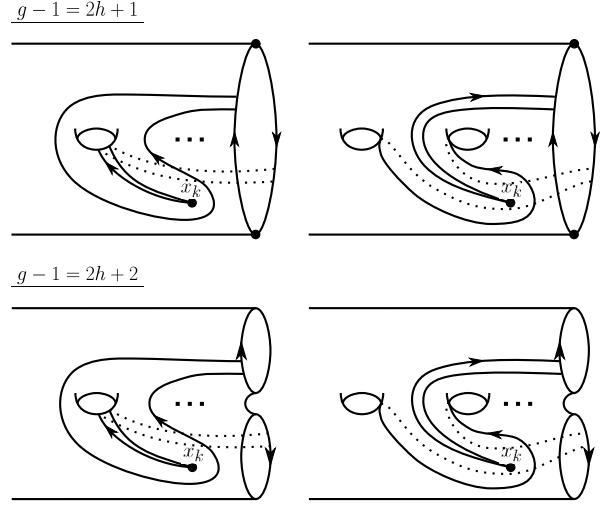}
\caption{The representative of $x_{k;j}x_{k;j+1}$ and $x_{k;k-1}x_{k;k+1}$ for $j\not=k-1,k$.}\label{loopx_kjx_kj+1}
\end{figure}

By the proof of Lemma~\ref{index2_2}, we have the following proposition.

\begin{prop}\label{gen1_pi_1plus}
For $g\geq 2$, $\pi _1(N_{g-1}^{(k)})^+$ is generated by the following elements:
\begin{enumerate}
 \item[(1)] $x_{k;i+1}x_{k;i}$, $x_{k;i}x_{k;i+1}$, $x_{k;k+1}x_{k;k-1}$, $x_{k;k-1}x_{k;k+1}$\ for $1\leq i\leq g-1$ and $i\not=k-1,k$,
 \item[(2)] $x_{k;2}^2$ \ when $k=1$,
 \item[(3)] $x_{k;1}^2$ \ when $2\leq k\leq g$.
\end{enumerate}
\end{prop}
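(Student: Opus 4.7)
The plan is to read the proposition off directly from the work already carried out in the proof of Lemma~\ref{index2_2}. By that lemma we have an equality $\pi_1(N_{g-1}^{(k)})^+ = H_k = (p_k)_\ast(\pi_1(\widetilde{N_{g-1}^{(k)}}))$. Since $\pi_1(\widetilde{N_{g-1}^{(k)}})$ is the free group (modulo one relation coming from the total boundary) on the two-sided loops $y_{k;i}$ displayed in Figure~\ref{orientation_cov1}, its image $H_k$ under $(p_k)_\ast$ is generated by the finite collection $\{(p_k)_\ast(y_{k;i})\}_i$. So the only thing left is to identify each of these images with a loop appearing in items (1)--(3) of the proposition.

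For this I would simply invoke the case-by-case formulas that were already computed inside the proof of Lemma~\ref{index2_2}. The loops $y_{k;i}$ with small index $i$ satisfy $(p_k)_\ast(y_{k;i}) = x_{k;\rho(i+1)}x_{k;\rho(i)}$ (together with the exceptional product $x_{k;g}x_{k;g-1}$), and these are exactly the generators in the set $A$, which are precisely the elements $x_{k;i+1}x_{k;i}$ together with $x_{k;k+1}x_{k;k-1}$ appearing in item (1). Symmetrically, the loops $y_{k;i}$ with large index $i$ satisfy $(p_k)_\ast(y_{k;i}) = x_{k;\rho^\prime(i)}x_{k;\rho^\prime(i+1)}$ (and the exceptional product $x_{k;g-1}x_{k;g}$), which are the elements of $B$, namely the reversed products $x_{k;i}x_{k;i+1}$ together with $x_{k;k-1}x_{k;k+1}$ in item (1). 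The unique middle loop $y_{k;2h+1}$ (in the odd case) or $y_{k;2h^\prime+2}$ (in the even case) projects to the single square in $C$, which is $x_{k;2}^2$ if $k=1$ (giving item (2)) and $x_{k;1}^2$ otherwise (giving item (3)).

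Thus the set $A \cup B \cup C$ coincides exactly with the list (1)--(3) in the proposition, and the proposition is just the generating statement corresponding to Lemma~\ref{index2_2} made explicit. There is no substantive obstacle here; the only care needed is bookkeeping to make sure that the cases $g-1=2h+1$ and $g-1=2h+2$ each produce precisely the stated list without duplications or omissions, and that the distinguished index $k$ is handled correctly in the exceptional products $x_{k;k\pm 1}x_{k;k\mp 1}$ and in the choice of square. Since the formulas and ranges were already verified in the proof of Lemma~\ref{index2_2}, the proposition follows with no further argument.
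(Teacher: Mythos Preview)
Your proposal is correct and is exactly the paper's approach: the paper simply states ``By the proof of Lemma~\ref{index2_2}, we have the following proposition,'' and you have spelled out in detail what this means by identifying the images $(p_k)_\ast(y_{k;i})$ with the sets $A$, $B$, $C$ defined there, which coincide with the list (1)--(3). One minor remark: you need not worry about whether \emph{every} element of $A\cup B\cup C$ is hit by some $(p_k)_\ast(y_{k;i})$, since enlarging a generating set still yields a generating set; so the bookkeeping concern you raise about ``omissions'' is harmless.
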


We remark that $x_{k;i+1}x_{k;i}=\beta _{k;i,i+1}$, $x_{k;i}x_{k;i+1}=\alpha _{k;i,i+1}$, $x_{k;k+1}x_{k;k-1}=\alpha _{k;k-1,k+1}$, $x_{k;k-1}x_{k;k+1}=\beta _{k;k-1,k+1}$ and $Y_{i,j}^2=Y_{j,i}^2$. Let $G$ be the subgroup of $\mathcal{T}_2(N_g)$ generated by $\cup _{k=1}^g\psi _k(\pi _1(N_{g-1}^{(k)})^+)$. The next corollary follows from Proposition~\ref{gen1_pi_1plus} immediately.

\begin{cor}\label{cor2}
For $g\geq 2$, $G$ is generated by the following elements:
\begin{enumerate}
 \item[(i)] $a_{k;i,i+1}$, $b_{k;i,i+1}$, $a_{k;k-1,k+1}$, $b_{k;k-1,k+1}$ \ for $1\leq k\leq g$, $1\leq i\leq g-1$ and $i\not=k-1, k$,
 \item[(ii)] $Y_{1,j}^2$  \ when $2\leq j\leq g$,
\end{enumerate} 
where the indices are considered modulo $g$.
\end{cor}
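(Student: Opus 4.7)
The plan is to observe that since $\psi_k$ is a homomorphism, applying $\psi_k$ to a generating set for $\pi_1(N_{g-1}^{(k)})^+$ produces a generating set for the image $\psi_k(\pi_1(N_{g-1}^{(k)})^+)$. Therefore, by taking the union over $1 \le k \le g$ of the $\psi_k$-images of the generators listed in Proposition~\ref{gen1_pi_1plus}, I obtain a generating set for $G$. The whole task reduces to translating these images into the $a$, $b$, $Y$ notation from Subsection~\ref{surf_notation}.

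First I would handle the type~(1) generators of Proposition~\ref{gen1_pi_1plus}. Using the equalities noted in the remark preceding the corollary, namely $x_{k;i+1}x_{k;i}=\beta_{k;i,i+1}$, $x_{k;i}x_{k;i+1}=\alpha_{k;i,i+1}$, $x_{k;k+1}x_{k;k-1}=\alpha_{k;k-1,k+1}$, and $x_{k;k-1}x_{k;k+1}=\beta_{k;k-1,k+1}$, and the definitions $a_{k;i,j}:=\psi_k(\alpha_{k;i,j})$ and $b_{k;i,j}:=\psi_k(\beta_{k;i,j})$, the $\psi_k$-images of the type~(1) generators are exactly
\[
a_{k;i,i+1},\ b_{k;i,i+1},\ a_{k;k-1,k+1},\ b_{k;k-1,k+1}
\]
for $1\le k\le g$, $1\le i\le g-1$, $i\neq k-1,k$. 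This matches generating set (i) of the corollary.

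Next I would handle the type~(2) and type~(3) generators. Because $Y_{i,j}=\psi_i(x_{i;j})$ and $\psi_k$ is a homomorphism, we have $\psi_1(x_{1;2}^2)=Y_{1,2}^2$ for the type~(2) generator, and $\psi_k(x_{k;1}^2)=Y_{k,1}^2$ for the type~(3) generators with $2\le k\le g$. Invoking the identity $Y_{i,j}^2=Y_{j,i}^2$ from the preceding remark, each $Y_{k,1}^2$ can be rewritten as $Y_{1,k}^2$, and together with $Y_{1,2}^2$ from the type~(2) case we obtain precisely $\{Y_{1,j}^2 : 2\le j\le g\}$, which is generating set (ii).

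There is no real obstacle here: once Proposition~\ref{gen1_pi_1plus} is in hand, the corollary is purely a notational translation combined with the homomorphism property of $\psi_k$ and the symmetry $Y_{i,j}^2=Y_{j,i}^2$. The only thing to be careful about is bookkeeping of the index ranges and confirming that the $k=1$ case of (2) does not produce a generator outside the family $\{Y_{1,j}^2\}_{j\ge 2}$, which it does not.
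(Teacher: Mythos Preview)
Your proposal is correct and follows exactly the route the paper takes: the paper simply states that the corollary ``follows from Proposition~\ref{gen1_pi_1plus} immediately,'' relying on the remark just before it that identifies $x_{k;i+1}x_{k;i}=\beta_{k;i,i+1}$, $x_{k;i}x_{k;i+1}=\alpha_{k;i,i+1}$, $x_{k;k+1}x_{k;k-1}=\alpha_{k;k-1,k+1}$, $x_{k;k-1}x_{k;k+1}=\beta_{k;k-1,k+1}$, and $Y_{i,j}^2=Y_{j,i}^2$. Your write-up is precisely the unpacking of that one-line deduction.
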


The simple loop $x_{k;1}^2$ and $x_{k;2}^2$ are separating loops. By the next proposition, $\pi _1(N_{g-1}^{(k)})^+$ is generated by finitely many two-sided non-separating simple loops.   

\begin{prop}\label{gen2_pi_1plus}
For $g\geq 2$, $\pi _1(N_{g-1}^{(k)})^+$ is generated by the following elements:
\begin{enumerate}
 \item[(1)] $x_{k;i+1}x_{k;i}$, $x_{k;i}x_{k;i+1}$, $x_{k;k+1}x_{k;k-1}$, $x_{k;k-1}x_{k;k+1}$\ for $1\leq i\leq g$ and $i\not=k-1,k$,
 \item[(2)] $x_{k;2}x_{k;4}$ \ when $k=1$ and $g-1$ is even, 
 \item[(3)] $x_{k;1}x_{k;4}$ \ when $k=2,3$ and $g-1$ is even,
 \item[(4)] $x_{k;1}x_{k;3}$ \ when $4\leq k\leq g$ and $g-1$ is even,
\end{enumerate} 
where the indices are considered modulo $g$. 
\end{prop}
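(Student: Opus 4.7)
The plan is to derive Proposition~\ref{gen2_pi_1plus} from Proposition~\ref{gen1_pi_1plus} by expressing the ``separating'' generators $x_{k;1}^{2}$ and $x_{k;2}^{2}$ of Proposition~\ref{gen1_pi_1plus} in terms of the generators listed in Proposition~\ref{gen2_pi_1plus}. The two generating sets differ only in that Proposition~\ref{gen2_pi_1plus} extends the range of $i$ in item~(1) to include $i=g$ (which, for $k\notin\{1,g\}$, supplies the additional pair products $x_{k;1}x_{k;g}$ and $x_{k;g}x_{k;1}$), and replaces the separating-loop generators of Proposition~\ref{gen1_pi_1plus} by either nothing (when $g-1$ is odd) or a single non-separating pair product from items (2)--(4) (when $g-1$ is even).

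The central algebraic tool is the identity
\[
x_{k;a}\,x_{k;j}^{2}\,x_{k;a}^{-1}=\bigl(x_{k;a}x_{k;j}\bigr)\bigl(x_{k;j}x_{k;m}\bigr)\bigl(x_{k;a}x_{k;m}\bigr)^{-1},
\]
valid for any triple of indices distinct from $k$, which writes a conjugate square as a product of three pair products. Combined with the defining relation $x_{k;1}^{2}x_{k;2}^{2}\cdots\widehat{x_{k;k}^{2}}\cdots x_{k;g}^{2}=1$ of $\pi_1(N_{g-1}^{(k)})$, one obtains
\[
x_{k;a}^{-2}=x_{k;a}\Bigl(\prod_{j\neq a,k}x_{k;j}^{2}\Bigr)x_{k;a}^{-1}=\prod_{j\neq a,k}\bigl(x_{k;a}\,x_{k;j}^{2}\,x_{k;a}^{-1}\bigr),
\]
where $a=2$ when $k=1$ and $a=1$ when $2\le k\le g$, and the inner product is taken in the natural cyclic order on $\{1,2,\dots,g\}\setminus\{k\}$. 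It therefore suffices to realize each factor $x_{k;a}x_{k;j}^{2}x_{k;a}^{-1}$ as a word in the generators of Proposition~\ref{gen2_pi_1plus}.

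The bookkeeping proceeds as follows. For a given $j$, one seeks an auxiliary index $m$ for which the three pair products $x_{k;a}x_{k;j}$, $x_{k;j}x_{k;m}$, and $x_{k;a}x_{k;m}$ all appear in Proposition~\ref{gen2_pi_1plus}---either among the adjacent pair products and the ``jump'' pair $\{k-1,k+1\}$ in item~(1) (now including $i=g$), or among the non-separating extras in (2)--(4). When $g-1$ is odd, the combination of adjacent pair products, the jump pair, and the newly added $i=g$ pair already supplies a valid $m$ for every $j$, so no separating loop is needed. When $g-1$ is even, a parity mismatch in the cyclic order causes exactly one of the pair products $x_{k;a}x_{k;j_{0}}$ to be absent from item~(1); this is precisely the pair supplied by the additional generator in items~(2)--(4), whose specific form---$x_{1;2}x_{1;4}$ when $k=1$, $x_{k;1}x_{k;4}$ when $k=2,3$, and $x_{k;1}x_{k;3}$ when $4\le k\le g$---reflects the position of the jump pair $\{k-1,k+1\}$ relative to $a\in\{1,2\}$.

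The main obstacle is the case analysis in this last step. One must verify uniformly, over all admissible $k$ and both parities of $g-1$, that the specific extras in items~(2)--(4) fill in precisely the pair products needed by the conjugation identity. Since the jump pair sits in three qualitatively different positions relative to $a\in\{1,2\}$ depending on whether $k=1$, $k\in\{2,3\}$, or $k\ge 4$, the verification splits accordingly. Once this case-by-case check is completed, combining the expressions for the factors $x_{k;a}x_{k;j}^{2}x_{k;a}^{-1}$ via the product formula above yields $x_{k;a}^{-2}$, hence $x_{k;a}^{2}$, as a word in the generators of Proposition~\ref{gen2_pi_1plus}, completing the reduction from Proposition~\ref{gen1_pi_1plus}.
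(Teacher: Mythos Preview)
Your conjugation identity is correct, but the plan built on it has a genuine gap. The identity
\[
x_{k;a}\,x_{k;j}^{2}\,x_{k;a}^{-1}=(x_{k;a}x_{k;j})(x_{k;j}x_{k;m})(x_{k;a}x_{k;m})^{-1}
\]
requires \emph{all three} pair products on the right to lie in the proposed generating set, and in particular $x_{k;a}x_{k;j}$ must be available for \emph{every} $j\neq a,k$. But item~(1) of Proposition~\ref{gen2_pi_1plus} supplies $x_{k;a}x_{k;j}$ only when $j$ is adjacent to $a$ in the cyclic order (or when $\{a,j\}=\{k-1,k+1\}$), and items~(2)--(4) add at most one further value of $j$. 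Thus for any $j$ in the ``middle'' of the range---e.g.\ $k=1$, $a=2$, and $j\in\{4,\dots,g-1\}$ with $g\ge 6$ in the odd case, or $j\in\{5,\dots,g-1\}$ in the even case---the factor $x_{k;a}x_{k;j}$ is simply not a generator, and no choice of $m$ repairs this. The claim that ``a valid $m$ exists for every $j$'' conflates the constraint on $m$ with the independent constraint that $x_{k;a}x_{k;j}$ already be present.

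The paper avoids this by not treating the conjugates $x_{k;a}x_{k;j}^{2}x_{k;a}^{-1}$ one at a time. Instead it writes $x_{k;a}^{2}$ directly as a single chain of pair products whose neighbouring factors share a letter. When $g-1$ is odd this is the alternating telescope
\[
x_{k;a}^{2}=(x_{k;a}x_{k;a+1})(x_{k;a+2}x_{k;a+1})^{-1}(x_{k;a+2}x_{k;a+3})\cdots (x_{k;g}x_{k;a}),
\]
which uses only adjacent pairs and the jump pair. When $g-1$ is even one cannot close the telescope with the correct sign, so the paper instead arranges the factors so that the inner letters produce exactly $\prod_{j}x_{k;j}^{2}$ (up to cyclic reordering), which equals $1$ by the surface relation; the single non-adjacent pair needed to start this chain is precisely the extra generator in (2)--(4). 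The point is that in this chain the ``long'' pair $x_{k;a}x_{k;j}$ never appears---only consecutive pairs do---so the obstruction that kills your approach does not arise.
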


\begin{proof}
When $g-1$ is odd, since we have
\[
x_{1;2}^2=x_{1;2}x_{1;3}\cdot x_{1;3}^{-1}x_{1;4}^{-1}\cdot x_{1;4}x_{1;5}\cdot \cdots \cdot x_{1;g-1}^{-1}x_{1;g}^{-1}\cdot x_{1;g}x_{1;2}
\]
and
\[
x_{k;1}^2=x_{k;1}x_{k;2}\cdot x_{k;2}^{-1}x_{k;3}^{-1}\cdot x_{k;3}x_{k;4}\cdot \cdots \cdot x_{k;g-1}^{-1}x_{k;g}^{-1}\cdot x_{k;g}x_{k;1}
\]
for $2\leq k\leq g$, this proposition is clear.

When $g-1$ is even, we use the relation 
\[
x_{k;1}^2\dots x_{k;k-1}^2x_{k;k+1}^2\dots x_{k;g}^2=1.
\]
For $k=1$, we have
\[
x_{1;2}^2=x_{1;2}\underline{x_{1;4}\cdot x_{1;4}x_{1;5}\cdot \dots \cdot x_{1;g}x_{1;2}\cdot x_{1;2}x_{1;3}\cdot x_{1;3}}x_{1;2}.
\] 
By a similar argument, we also have following equations:
\[
x_{k;1}^2= \left\{ \begin{array}{ll}
  x_{2;1}^2  =x_{2;1}\underline{x_{2;4}\cdot x_{2;4}x_{2;5}\cdot \dots \cdot x_{2;g}x_{2;1}\cdot x_{2;1}x_{2;3}\cdot x_{2;3}}x_{2;1} &\hspace{-0.15cm}\text{if} \ k=2,\\
  x_{3;1}^2  =x_{3;1}\underline{x_{3;4}\cdot x_{3;4}x_{3;5}\cdot \dots \cdot x_{3;g}x_{3;1}\cdot x_{3;1}x_{3;2}\cdot x_{3;2}}x_{3;1} &\hspace{-0.15cm}\text{if} \ k=3,\\
  x_{k;1}^2  =x_{k;1}\underline{x_{k;3}\cdot x_{k;3}x_{k;4}\cdot \dots \cdot x_{k;k-1}x_{k;k+1}\cdot \cdots } & \\
 \hspace{1.5cm}\underline{\cdot x_{k;g}x_{k;1}\cdot x_{k;1}x_{k;2}\cdot x_{k;2}}x_{k;1} &\hspace{-0.15cm}\text{if} \ 4\leq k\leq g. 
 \end{array} \right.
\]
We obtain this proposition.
\end{proof}

We remark that $x_{k;1}x_{k;g}=\beta _{k;1,g}$, $x_{k;g}x_{k;1}=\alpha _{k;1,g}$, $x_{1;2}x_{1;4}=\alpha _{1;2,4}$, $x_{k;1}x_{k;4}=\beta _{k;1,4}$ for $k=2,3$ and $x_{k;1}x_{k;3}=\alpha _{k;1,3}$ for $4\leq k\leq g$. By the above remarks, we have the following corollary.

\begin{cor}\label{cor1}
For $g\geq 2$, $G$ is generated by the following elements:
\begin{enumerate}
 \item[(i)] $a_{k;i,i+1}$, $b_{k;i,i+1}$, $a_{k;k-1,k+1}$, $b_{k;k-1,k+1}$ \ for $1\leq k\leq g$, $1\leq i\leq g$ and $i\not=k-1, k$,
 \item[(ii)] $a_{1;2,4}$, $b_{k;1,4}$, $a_{l;1,3}$ \ for $k=2, 3$ and $4\leq l\leq g$ when $g$ is odd,
\end{enumerate} 
where the indices are considered modulo $g$. 
\end{cor}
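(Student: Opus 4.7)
The plan is to transport Proposition~\ref{gen2_pi_1plus} through the crosscap pushing homomorphisms $\psi_k$ and collect the resulting elements over $k=1,\dots,g$. By definition, $G$ is the subgroup of $\mathcal{T}_2(N_g)$ generated by $\bigcup_{k=1}^g \psi_k(\pi_1(N_{g-1}^{(k)})^+)$, and because each $\psi_k$ is a homomorphism, the image of any generating set of $\pi_1(N_{g-1}^{(k)})^+$ generates $\psi_k(\pi_1(N_{g-1}^{(k)})^+)$. So the task reduces to rewriting the generating set from Proposition~\ref{gen2_pi_1plus} in the $a_{k;i,j},b_{k;i,j}$ notation.

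For this translation, I would invoke the loop identifications recorded immediately before the statement: $x_{k;i+1}x_{k;i}=\beta_{k;i,i+1}$, $x_{k;i}x_{k;i+1}=\alpha_{k;i,i+1}$, $x_{k;k+1}x_{k;k-1}=\alpha_{k;k-1,k+1}$, and $x_{k;k-1}x_{k;k+1}=\beta_{k;k-1,k+1}$, together with the parity-dependent identifications $x_{1;2}x_{1;4}=\alpha_{1;2,4}$, $x_{k;1}x_{k;4}=\beta_{k;1,4}$ for $k=2,3$, and $x_{k;1}x_{k;3}=\alpha_{k;1,3}$ for $4\leq k\leq g$. Combined with the definitions $a_{k;i,j}=\psi_k(\alpha_{k;i,j})$ and $b_{k;i,j}=\psi_k(\beta_{k;i,j})$, the generators in item (1) of Proposition~\ref{gen2_pi_1plus} push forward under $\psi_k$ to precisely the elements in item (i) of the corollary as $k$ varies, while the additional generators in items (2)--(4), which appear exactly when $g-1$ is even, push forward to item (ii). The condition "$g-1$ even" is the same as "$g$ odd", which matches the hypothesis in (ii), and the modular index convention carries over unchanged from Proposition~\ref{gen2_pi_1plus}.

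There is no substantive obstacle: the real content lives in Proposition~\ref{gen2_pi_1plus}, and this corollary is a bookkeeping restatement after applying $\psi_k$ and renaming loops. The only things that require a careful check while writing the proof are that the parity alignment is correct and that the index restrictions in (i)--(ii) of the corollary agree one-to-one with those in (1)--(4) of the proposition.
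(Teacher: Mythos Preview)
Your proposal is correct and is precisely the approach the paper takes: the corollary is stated immediately after Proposition~\ref{gen2_pi_1plus} with only a one-line remark listing the loop identifications (including the wrap-around cases $x_{k;1}x_{k;g}=\beta_{k;1,g}$ and $x_{k;g}x_{k;1}=\alpha_{k;1,g}$ that handle $i=g$ modulo $g$), and then simply says ``By the above remarks, we have the following corollary.'' Your write-up is in fact more explicit than the paper's, but the content is identical.
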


\subsection{Proof of Main-Theorem}\label{proof_main}
First, we obtain a finite generating set for $\mathcal{T}_2(N_g)$ by the Reidemeister-Schreier method. We use the following minimal generating set for $\Gamma _2(N_g)$ given by Hirose and Sato~\cite{Hirose-Sato} when $g\geq 5$ and Szepietowski~\cite{Szepietowski2} when $g=3,4$ to apply the Reidemeister-Schreier method. See for instance \cite{Johnson} to recall the Reidemeister-Schreier method.

\begin{thm}\cite{Hirose-Sato,Szepietowski2}\label{minimal_gen}
For $g\geq 3$, $\Gamma _2(N_g)$ is generated by the following elements:
\begin{enumerate}
 \item[(1)] $Y_{i,j}$\ for $1\leq i\leq g-1$, $1\leq j\leq g$ and $i\not=j$,
 \item[(2)] $T_{1,j,k,l}^2$ \ for $2\leq j< k< l\leq g$ when $g\geq 4$.
\end{enumerate} 
\end{thm}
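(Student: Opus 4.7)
The plan is to obtain the generating set in two stages: first produce an explicit (possibly redundant) finite generating set for $\Gamma _2(N_g)$ starting from a known generating set of $\mathcal{M}(N_g)$, and then reduce it to the claimed list and verify minimality.

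For the first stage, I would begin with Chillingworth's finite generating set for $\mathcal{M}(N_g)$ (finitely many Dehn twists along non-separating two-sided simple closed curves together with a single Y-homeomorphism) and the description of the quotient $\mathcal{M}(N_g)/\Gamma _2(N_g)$ as an orthogonal-type subgroup of $\operatorname{Aut}(H_1(N_g;\mathbb Z_2),\cdot)$. After choosing a Schreier transversal for $\Gamma _2(N_g)$ in $\mathcal{M}(N_g)$ compatible with the mod-$2$ homological action of these generators, the Reidemeister--Schreier procedure produces a finite set of coset-rewrites generating $\Gamma _2(N_g)$.

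For the second stage, I would note that both families $(1)$ and $(2)$ in the theorem already lie in $\Gamma _2(N_g)$: Y-homeomorphisms lie in $\Gamma _2(N_g)$ by \cite[Lemma~3.6]{Szepietowski1}, and each $T_{1,j,k,l}^2$ does because the Dehn twist $t_\alpha $ acts on $H_1(-;\mathbb Z_2)$ by the transvection $v\mapsto v+(v\cdot [\alpha ])[\alpha ]$, which is an involution. I would then systematically rewrite each Reidemeister--Schreier generator as a word in the claimed generators using surface-topological identities: chain and lantern relations in orientable subsurfaces and their non-orientable analogues coming from Klein bottles with a hole (which is where Y-homeomorphisms naturally enter), the formulas of Lemma~\ref{pushing1} and Lemma~\ref{pushing2} relating crosscap pushes to $T_\gamma T_{\gamma '}^{-1}$ and to $Y_{\mu ,\widetilde{\gamma }}$, and the fact that the rotation $\eta $ of Figure~\ref{rotation} cyclically permutes indices so that any $Y_{g,j}$ can be rewritten in terms of $Y_{i,j'}$ with $i\leq g-1$ modulo the already-listed generators; a parallel reduction lets one bring any $T_{i,j,k,l}^2$ into the shape $T_{1,j',k',l'}^2$ at the cost of conjugation by mapping classes whose squares are controlled.

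For minimality, the count of listed generators is $(g-1)^2 + \binom{g-1}{3} = \binom{g}{2}+\binom{g}{3}$, which equals $\dim _{\mathbb Z_2} H_1(\Gamma _2(N_g);\mathbb Z_2)$; since every listed generator is $2$-torsion in the abelianization, no proper subcollection can generate, giving minimality once the abelianization is known. The main obstacle is the second-stage reduction: producing explicit words for each Reidemeister--Schreier generator in terms of the listed Y-homeomorphisms and squared twists requires a carefully organized catalogue of non-orientable relations (there is no direct lantern relation available in a non-orientable setting, so one must substitute identities built by capping off Klein-bottle subsurfaces), and this step, together with the $H_1$ calculation that underwrites minimality for $g\geq 5$, is where the bulk of the technical work lies.
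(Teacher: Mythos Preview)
This theorem is not proved in the present paper: it is quoted, with citation to \cite{Hirose-Sato,Szepietowski2}, and used as a black-box input to Proposition~\ref{r-m}. So there is no proof here to compare your proposal against.

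On the merits of your plan, there is a genuine obstacle in the first stage. Applying Reidemeister--Schreier to $\Gamma _2(N_g)\subset \mathcal{M}(N_g)$ requires a Schreier transversal of size $[\mathcal{M}(N_g):\Gamma _2(N_g)]=|\operatorname{Aut}(H_1(N_g;\mathbb Z_2),\cdot )|$, which grows super-exponentially in $g$; the resulting generator list is not something you can reduce by a finite catalogue of relations uniformly in $g$. This is not how the cited references proceed: Szepietowski first shows in \cite{Szepietowski1} that $\Gamma _2(N_g)$ is generated by all Y-homeomorphisms (equivalently, by the images of the crosscap pushing maps), and \cite{Szepietowski2} then reduces that family by explicit relations among crosscap slides; Hirose--Sato compute $H_1(\Gamma _2(N_g))$ and use it both to refine the list and to certify minimality for $g\geq 5$.

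There is also a concrete gap in your second stage. You propose using the rotation $\eta $ to rewrite $Y_{g,j}$ in terms of $Y_{i,j'}$ with $i\leq g-1$, but $\eta \notin \Gamma _2(N_g)$ (it permutes the mod-$2$ basis $[\alpha _i]$ nontrivially), so conjugation by $\eta $ sends generators of $\Gamma _2(N_g)$ to other elements of $\Gamma _2(N_g)$ without giving a relation \emph{inside} $\Gamma _2(N_g)$; it does not let you eliminate $Y_{g,j}$ from a generating set. What is actually needed is an internal relation expressing $Y_{g,j}$ as a word in the listed $Y_{i,j}$ and $T_{1,j,k,l}^2$, and those relations come from the crosscap-slide calculus of \cite{Szepietowski1,Szepietowski2}, not from symmetries of the model. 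The same remark applies to your proposed reduction of $T_{i,j,k,l}^2$ to $T_{1,j',k',l'}^2$ by conjugation.
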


\begin{prop}\label{r-m}
For $g\geq 3$, $\mathcal{T}_2(N_g)$ is generated by the following elements:
\begin{enumerate}
 \item[(1)] $Y_{i,j}Y_{1,2}$, $Y_{i,j}^2$\ for $1\leq i\leq g-1$, $1\leq j\leq g$ and $i\not=j$,
 \item[(2)] $Y_{1,2}^{-1}T_{1,j,k,l}^2Y_{1,2}$, $T_{1,j,k,l}^2$ \ for $2\leq j< k< l\leq g$ when $g\geq 4$.
\end{enumerate} 
\end{prop}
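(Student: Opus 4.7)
The plan is to apply the Reidemeister--Schreier procedure to the finite generating set of $\Gamma_2(N_g)$ supplied by Theorem~\ref{minimal_gen}, viewing $\mathcal{T}_2(N_g)$ as an index-$2$ subgroup of $\Gamma_2(N_g)$. To justify the index, I would use that Lickorish showed $\mathcal{T}(N_g)$ is an index-$2$ subgroup of $\mathcal{M}(N_g)$ whose non-trivial coset is represented by any Y-homeomorphism. Since $Y_{1,2}\in\Gamma_2(N_g)$ but $Y_{1,2}\notin\mathcal{T}(N_g)$, the composition $\Gamma_2(N_g)\hookrightarrow\mathcal{M}(N_g)\twoheadrightarrow\mathcal{M}(N_g)/\mathcal{T}(N_g)\cong\mathbb{Z}_2$ is surjective with kernel $\Gamma_2(N_g)\cap\mathcal{T}(N_g)=\mathcal{T}_2(N_g)$. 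This gives $[\Gamma_2(N_g):\mathcal{T}_2(N_g)]=2$ and identifies $T=\{1,Y_{1,2}\}$ as a Schreier transversal.

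For each $t\in T$ and each generator $x$ of $\Gamma_2(N_g)$ from Theorem~\ref{minimal_gen}, I would form the Reidemeister--Schreier generator $t\cdot x\cdot\overline{tx}^{\,-1}$, where $\overline{g}$ denotes the representative in $T$ of the coset $\mathcal{T}_2(N_g)\cdot g$. Since every $Y_{i,j}$ lies in the non-trivial coset while every $T_{1,j,k,l}^2$ lies in the trivial coset, this produces the generating set
\[
\{\,Y_{i,j}Y_{1,2}^{-1},\ Y_{1,2}Y_{i,j}\,\}\ \cup\ \{\,T_{1,j,k,l}^2,\ Y_{1,2}T_{1,j,k,l}^2Y_{1,2}^{-1}\,\}
\]
for $\mathcal{T}_2(N_g)$, with $(i,j)$ and $(j,k,l)$ ranging as in Theorem~\ref{minimal_gen}.

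The final step is to translate this raw output into the form stated in the proposition. The element $Y_{1,2}^2$ appears both as $Y_{i,j}^2$ with $(i,j)=(1,2)$ and as the Reidemeister--Schreier generator obtained from $t=Y_{1,2}$, $x=Y_{1,2}$. Using it as a pivot, one checks the elementary identities
\[
Y_{i,j}Y_{1,2}=(Y_{i,j}Y_{1,2}^{-1})\,Y_{1,2}^{2},\qquad Y_{i,j}^{2}=(Y_{i,j}Y_{1,2}^{-1})(Y_{1,2}Y_{i,j}),
\]
\[
Y_{1,2}^{-1}T_{1,j,k,l}^{2}Y_{1,2}=Y_{1,2}^{-2}\cdot\bigl(Y_{1,2}T_{1,j,k,l}^{2}Y_{1,2}^{-1}\bigr)\cdot Y_{1,2}^{2},
\]
together with their obvious inverses, which show that the stated set and the raw Reidemeister--Schreier set generate the same subgroup. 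The whole argument is mechanical once the index is established; the only mildly delicate point is matching the raw generators $Y_{i,j}Y_{1,2}^{\pm 1}$ with the stated form $Y_{i,j}Y_{1,2},\,Y_{i,j}^{2}$, which works precisely because $Y_{1,2}^{2}$ is itself one of the listed generators.
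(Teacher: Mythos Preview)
Your proposal is correct and follows essentially the same route as the paper: establish that $\mathcal{T}_2(N_g)$ has index~$2$ in $\Gamma_2(N_g)$ via Lickorish's result, take $\{1,Y_{1,2}\}$ as a Schreier transversal, apply Reidemeister--Schreier to the generating set of Theorem~\ref{minimal_gen}, and then reduce the raw output using $Y_{1,2}^2$. The only cosmetic difference is that the paper uses the convention $\overline{wu}^{\,-1}wu$ with $w\in X^{\pm}$, which yields $Y_{i,j}^{\pm1}Y_{1,2}$ and $Y_{1,2}^{-1}Y_{i,j}^{\pm1}$ directly and hence slightly shortens the final simplification step.
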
  

\begin{proof}
Note that $\mathcal{T}_2(N_g)$ is the intersection of $\Gamma _2(N_g)$ and $\mathcal{T}(N_g)$. Hence we have the isomorphisms
\[
\Gamma _2(N_g)/(\Gamma _2(N_g)\cap \mathcal{T}(N_g))\cong (\Gamma _2(N_g)\mathcal{T}(N_g))/\mathcal{T}(N_g)\cong \mathbb Z_2[Y_{1,2}].
\]
We remark that $\Gamma _2(N_g)\mathcal{T}(N_g)=\mathcal{M}(N_g)$ and the last isomorphism is given by Lickorish~\cite{Lickorish2}. Thus $\mathcal{T}_2(N_g)$ is an index $2$ subgroup of $\Gamma _2(N_g)$. 

Set $U:=\{ Y_{1,2},1\}$ and $X$ as the generating set for $\Gamma _2(N_g)$ in Theorem~\ref{minimal_gen}, where $1$ means the identity element. Then $U$ is a Schreier transversal for $\mathcal{T}_2(N_g)$ in $\Gamma _2(N_g)$. For $x\in \Gamma _2(N_g)$, define $\overline{x}$ as the element of $U$ such that $[\overline{x}]=[x]$ in $\Gamma _2(N_g)/\mathcal{T}_2(N_g)$. By the Reidemeister-Schreier method, for $g\geq 4$, $\mathcal{T}_2(N_g)$ is generated by
\begin{align*}
B=&\{ \overline{wu}^{-1}wu \mid w\in X^\pm ,\ u\in U,\ wu\not\in U \}\\
=&\{ \overline{Y_{i,j}^{\pm 1}Y_{1,2}}^{-1}Y_{i,j}^{\pm 1}Y_{1,2}, \overline{Y_{i,j}^{\pm 1}}^{-1}Y_{i,j}^{\pm 1} \mid 1\leq i\leq g-1,\ 1\leq j\leq g,\ i\not=j\}\\
&\cup \{ \overline{T_{1,j,k,l}^{\pm 2}Y_{1,2}}^{-1}T_{1,j,k,l}^{\pm 2}Y_{1,2}, \overline{T_{1,j,k,l}^{\pm 2}}^{-1}T_{1,j,k,l}^{\pm 2} \mid 2\leq j< k< l\leq g\}\\
=&\{ Y_{i,j}^{\pm 1}Y_{1,2}, Y_{1,2}^{-1}Y_{i,j}^{\pm 1}\mid 1\leq i\leq g-1,\ 1\leq j\leq g,\ i\not=j\}\\
&\cup \{ Y_{1,2}^{-1}T_{1,j,k,l}^{\pm 2}Y_{1,2}, T_{1,j,k,l}^{\pm 2} \mid 2\leq j< k< l\leq g\},
\end{align*}
where $X^\pm:=X\cup \{ x^{-1}\mid x\in X\}$ and note that equivalence classes of Y-homeomorphisms in $\Gamma _2(N_g)/\mathcal{T}_2(N_g)$ is nontrivial. Since $Y_{1,2}^{-1}Y_{i,j}^{\pm 1}=(Y_{i,j}^{\mp 1}Y_{1,2})^{-1}$ and $Y_{i,j}^{-1}Y_{1,2}=Y_{i,j}^{-2}\cdot Y_{i,j}Y_{1,2}$, we have the following generating set for $\mathcal{T}_2(N_g)$:
\begin{align*}
B^\prime =&\{ Y_{i,j}Y_{1,2}, Y_{i,j}^{2}\mid 1\leq i\leq g-1,\ 1\leq j\leq g,\ i\not=j\}\\
&\cup \{ Y_{1,2}^{-1}T_{1,j,k,l}^2Y_{1,2}, T_{1,j,k,l}^2 \mid 2\leq j< k< l\leq g\}.
\end{align*}
By a similar discussion, $\mathcal{T}_2(N_3)$ is generated by
\begin{align*}
B^\prime =&\{ Y_{i,j}Y_{1,2}, Y_{i,j}^{2}\mid 1\leq i\leq 2,\ 1\leq j\leq 3,\ i\not=j\}.
\end{align*}

We obtain this proposition.
\end{proof}

Let $\mathcal{G}$ be the group generated by the elements of type (i), (ii) and (iii) in Theorem~\ref{main-thm}. Then $\mathcal{G}$ is a subgroup of $\mathcal{T}_2(N_g)$ clearly and it is sufficient for the proof of Theorem~\ref{main-thm} to prove $B^\prime \subset \mathcal{G}$, where $B^\prime $ is the generating set for $\mathcal{T}_2(N_g)$ in the proof of Proposition~\ref{r-m}. By Corollary~\ref{cor1}, we have $\psi _k(\pi _1(N_{g-1}^{(k)})^+)\subset \mathcal{G}$ for any $1\leq k\leq g$. Thus $Y_{i,j}^{2}=\psi _i(x_{i;j}^2)\in \psi _i(\pi _1(N_{g-1}^{(i)})^+)\subset \mathcal{G}$. We complete the proof of Theorem~\ref{main-thm} if $Y_{i,j}Y_{1,2}$ and $Y_{1,2}^{-1}T_{1,j,k,l}^2Y_{1,2}$ are in $\mathcal{G}$.

\begin{lem}
For $g\geq 4$, $Y_{1,2}^{-1}T_{1,j,k,l}^2Y_{1,2}\in \mathcal{G}$.
\end{lem}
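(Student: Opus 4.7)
The plan is to exploit the conjugation identity $h\,T_c\,h^{-1}=T_{h(c)}$ for any mapping class $h$ and two-sided simple closed curve $c$; squaring gives
\[
Y_{1,2}^{-1}\,T_{1,j,k,l}^{2}\,Y_{1,2}\;=\;T_{c_{j,k,l}}^{2},\qquad c_{j,k,l}:=Y_{1,2}^{-1}(\alpha_{1,j,k,l}),
\]
so the right-hand side is insensitive to the transverse orientation induced on $c_{j,k,l}$. The task reduces to identifying $c_{j,k,l}$ up to isotopy and exhibiting $T_{c_{j,k,l}}^{2}$ as a word in the generators of $\mathcal{G}$.

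First I would compute $c_{j,k,l}$ geometrically. The Y-homeomorphism $Y_{1,2}$ is supported in a regular neighborhood $U$ of the union of the first crosscap and $\alpha_{1,2}$, so $c_{j,k,l}$ agrees with $\alpha_{1,j,k,l}$ outside $U$ and only its portion inside $U$ needs to be tracked as the first crosscap is slid back along $\alpha_{1,2}$. I would split into cases: for $j\geq 3$, $\alpha_{1,2}$ meets $\alpha_{1,j,k,l}$ transversally in a single point on the first crosscap, so the change is purely local and yields another non-separating two-sided simple closed curve whose complement has the same topological type as that of $\alpha_{1,j,k,l}$; for $j=2$, $\alpha_{1,2}$ sits as a subloop of $\alpha_{1,2,k,l}$ and the local picture is more delicate.

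Second, with $c_{j,k,l}$ made explicit, I would seek $\phi\in\mathcal{G}$ realizing $\phi(\alpha_{1,j,k,l})=c_{j,k,l}$. The natural candidates are the type~(i) crosscap pushing maps $a_{1;i,i'}$ and $b_{1;i,i'}$ (and, for odd $g$, the type~(ii) generators based at $x_1$): they are all supported near the first crosscap and act on $\alpha_{1,j,k,l}$ by modifying only the local arc through it, exactly as $Y_{1,2}^{-1}$ does. Once such a $\phi$ is located, the desired conclusion is immediate from
\[
T_{c_{j,k,l}}^{2}\;=\;\phi\,T_{1,j,k,l}^{2}\,\phi^{-1},
\]
together with $T_{1,j,k,l}^{2}\in\mathcal{G}$ (type~(iii)) and $\phi\in\mathcal{G}$.

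The main obstacle is the case $j=2$: because $\alpha_{1,2}$ and $\alpha_{1,2,k,l}$ are not transverse but share a subarc, the naive ``conjugation by a single crosscap-pushing map'' strategy likely fails. There I expect to rewrite $T_{c_{2,k,l}}^{2}\cdot T_{1,2,k,l}^{-2}$ directly as a product of type~(i) generators, using the identity $Y_{1,2}^{2}=\psi_{1}(x_{1;2}^{2})\in\mathcal{G}$ established in the discussion following Corollary~\ref{cor1} to absorb the squared Y-homeomorphism, together with a lantern- or chain-type relation supported on the non-orientable subsurface spanned by the $1$st, $2$nd, $j$-th, $k$-th, and $l$-th crosscaps. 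This subsurface-level reduction should make the remaining calculation finite and mechanical.
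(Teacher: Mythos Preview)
Your overall strategy---conjugation gives $Y_{1,2}^{-1}T_{1,j,k,l}^{2}Y_{1,2}=T_{c_{j,k,l}}^{2}$, then find $\phi\in\mathcal{G}$ with $\phi(\alpha_{1,j,k,l})=c_{j,k,l}$---is exactly the paper's approach. However, your predicted case difficulty is inverted. In the paper's computation the case $j=2$ is the \emph{easy} one: a single crosscap pushing map already does the job, namely $a_{1;k,l}(\alpha_{1,2,k,l})=Y_{1,2}^{-1}(\alpha_{1,2,k,l})$ (with reversed local orientation), so
\[
Y_{1,2}^{-1}T_{1,2,k,l}^{2}Y_{1,2}=a_{1;k,l}\,T_{1,2,k,l}^{-2}\,a_{1;k,l}^{-1}.
\]
It is the case $j\geq 3$ that requires the extra ingredient you reserved for $j=2$: here the paper finds $Y_{1,2}^{-2}a_{1;2,j}a_{1;k,l}(\alpha_{1,j,k,l})=Y_{1,2}^{-1}(\alpha_{1,j,k,l})$ (again with reversed orientation), so
\[
Y_{1,2}^{-1}T_{1,j,k,l}^{2}Y_{1,2}=Y_{1,2}^{-2}a_{1;2,j}a_{1;k,l}\,T_{1,j,k,l}^{-2}\,a_{1;k,l}^{-1}a_{1;2,j}^{-1}Y_{1,2}^{2},
\]
and one concludes using $Y_{1,2}^{2}\in\psi_{1}(\pi_{1}(N_{g-1}^{(1)})^{+})\subset\mathcal{G}$ together with $a_{1;k,l},a_{1;2,j}\in\mathcal{G}$ by Corollary~\ref{cor1}. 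No lantern- or chain-type relation is needed in either case; your proposed detour through such relations for $j=2$ would be unnecessary work, and the tool you correctly identified ($Y_{1,2}^{2}\in\mathcal{G}$) belongs in the $j\geq 3$ branch instead. Two further minor points: the $\phi$ you seek need not be a single type~(i) generator (it is a short product), and because $Y_{1,2}^{-1}$ reverses the local orientation of a neighborhood of the curve, the conjugate is $\phi\,T_{1,j,k,l}^{-2}\,\phi^{-1}$ rather than $\phi\,T_{1,j,k,l}^{2}\,\phi^{-1}$; this is harmless since $T_{1,j,k,l}^{-2}\in\mathcal{G}$.
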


\begin{proof}
Since $Y_{1,2}^{-1}T_{1,j,k,l}^2Y_{1,2}=t_{Y_{1,2}^{-1}(\alpha _{1,j,k,l})}$, $Y_{1,2}^{-1}T_{1,j,k,l}^2Y_{1,2}$ is a Dehn twist along the two-sided simple closed curve as in Figure~\ref{scc_y_gamma1jkl}. Then we have $a_{1;k,l}(\alpha _{1,2,k,l})=Y_{1,2}^{-1}(\alpha _{1,2,k,l})$ and $Y_{1,2}^{-2}a_{1;2,j}a_{1;k,l}(\alpha _{1,j,k,l})=Y_{1,2}^{-1}(\alpha _{1,j,k,l})$ for $3\leq j\leq g$ and the local orientation of the regular neighborhood of $a_{1;k,l}(\alpha _{1,2,k,l})$ (resp. $Y_{1,2}^{-2}a_{1;2,j}a_{1;k,l}(\alpha _{1,j,k,l})$) and $Y_{1,2}^{-1}(\alpha _{1,2,k,l})$ (resp. $Y_{1,2}^{-1}(\alpha _{1,j,k,l})$) are different. Therefore we have
\begin{align*}
Y_{1,2}^{-1}T_{1,2,k,l}^2Y_{1,2}=& a_{1;k,l}T_{1,2,k,l}^{-2}a_{1;k,l}^{-1},\\
Y_{1,2}^{-1}T_{1,j,k,l}^2Y_{1,2}=& Y_{1,2}^{-2}a_{1;2,j}a_{1;k,l}T_{1,j,k,l}^{-2}a_{1;k,l}^{-1}a_{1;2,j}^{-1}Y_{1,2}^2\ \text{for }3\leq j\leq g.
\end{align*}
By Corollary~\ref{cor1}, $a_{1;k,l},a_{1;2,j}\in \psi _1(\pi _1(N_{g-1}^{(1)})^+)\subset \mathcal{G}$. We obtain this lemma.
\end{proof}

\begin{figure}[h]
\includegraphics[scale=0.65]{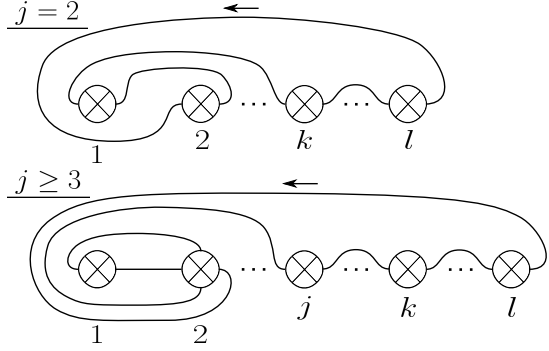}
\caption{The upper side of the figure is the simple closed curve $Y_{1,2}^{-1}(\alpha _{1,2,k,l})$ on $N_g$ and the lower side of the figure is the simple closed curve $Y_{1,2}^{-1}(\alpha _{1,j,k,l})$ on $N_g$ for $3\leq j\leq g$.}\label{scc_y_gamma1jkl}
\end{figure}

Szepietowski~\cite[Lemma~3.1]{Szepietowski1} showed that for any non-separating two-sided simple closed curve $\gamma $, $t_\gamma ^2$ is a product of two Y-homeomorphisms. In particular, we have the following lemma.

\begin{lem}[\cite{Szepietowski1}]\label{t2_yy} 
For distinct $1\leq i,j\leq g$, 
\[
Y_{j,i}^{-1}Y_{i,j}=Y_{j,i}Y_{i,j}^{-1}=\left\{ \begin{array}{ll}
 T_{i,j}^2 &\hspace{-0.15cm}\text{for} \ i<j,\\
 T_{i,j}^{-2} &\hspace{-0.15cm}\text{for} \ j<i. 
 \end{array} \right.
\]
\end{lem}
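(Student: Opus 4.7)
The plan is to derive the lemma as a direct specialization of Szepietowski's general result \cite[Lemma~3.1]{Szepietowski1}, applied to the two-sided simple closed curve $\alpha_{i,j}$. I would split the proof into two parts: first the algebraic identity $Y_{j,i}^{-1}Y_{i,j}=Y_{j,i}Y_{i,j}^{-1}$, and then the computation of this common element as $T_{i,j}^{\pm 2}$.

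The first equality is immediate from the relation $Y_{i,j}^2=Y_{j,i}^2$ already recorded in Subsection~\ref{surf_notation}: rewriting it as $Y_{i,j}\cdot Y_{i,j}=Y_{j,i}\cdot Y_{j,i}$ and rearranging gives $Y_{j,i}^{-1}Y_{i,j}=Y_{j,i}Y_{i,j}^{-1}$. So from here the work is to evaluate the common product.

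For the main content, I would observe that the one-sided simple closed curves $\alpha_i$ and $\alpha_j$ (the cores of the $i$-th and $j$-th crosscaps) each intersect $\alpha_{i,j}$ transversely at a single point, as is visible from Figure~\ref{scc_gammai1i2in}. Szepietowski's lemma applied to $\gamma=\alpha_{i,j}$ with these two one-sided curves then expresses $t_{\alpha_{i,j}}^{\pm 2}$ as the product of the two Y-homeomorphisms $Y_{\alpha_i,\alpha_{i,j}}=Y_{i,j}$ and $Y_{\alpha_j,\alpha_{i,j}}=Y_{j,i}$, yielding an equation of the shape $Y_{j,i}^{\pm 1}Y_{i,j}^{\mp 1}=T_{i,j}^{\pm 2}$.

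The remaining task, and the main obstacle, is the sign bookkeeping. The Dehn twist $T_{i,j}=t_{\alpha_{i,j}}$ is defined in Figure~\ref{scc_gammai1i2in} with a preferred direction indicated by an arrow corresponding to an index ordering $i<j$; one must check that the pushing direction used in the definition of $Y_{\mu,\alpha}$ (Subsection~\ref{Preliminaries}) matches the local orientation of the annular neighborhood of $\alpha_{i,j}$ appearing in Szepietowski's formula. Carrying out this local orientation check shows that for $i<j$ the conventions agree and yield $Y_{j,i}^{-1}Y_{i,j}=T_{i,j}^2$, while for $j<i$ interchanging $i$ and $j$ reverses the chosen orientation and therefore produces $T_{i,j}^{-2}$. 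Once this single orientation comparison is settled, both cases of the lemma follow at once.
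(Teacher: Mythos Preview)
Your approach is correct and matches the paper's exactly: the paper does not give an independent proof but simply records the lemma as a direct consequence of \cite[Lemma~3.1]{Szepietowski1}, which is precisely the route you take. One minor correction: the relation $Y_{i,j}^2=Y_{j,i}^2$ is recorded in the remark following Proposition~\ref{gen1_pi_1plus}, not in Subsection~\ref{surf_notation}.
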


\begin{lem}\label{t2} 
For distinct $1\leq i,j\leq g$, $T_{i,j}^2\in \mathcal{G}$.
\end{lem}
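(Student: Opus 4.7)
The plan is to treat $g=3$ and $g\geq 4$ separately. For $g=3$, the remark at the end of the subsection ``Notations of mapping classes'' asserts that $a_{k;i,j}=T_{i,j}^2$ for any distinct $i,j,k\in\{1,2,3\}$, so $T_{i,j}^2$ is literally a type (i) generator of $\mathcal{G}$ and there is nothing to prove.

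For $g\geq 4$, the strategy is to express $T_{i,j}^2$ as a product involving an appropriate crosscap pushing map and a type (iii) generator (plus, possibly, auxiliary $\mathcal{G}$-elements). Pick an index $k$ distinct from $i,j$; then $a_{k;i,j}\in\psi_k(\pi_1(N_{g-1}^{(k)})^+)\subset\mathcal{G}$ by Corollary~\ref{cor1}. By Lemma~\ref{pushing1}, $a_{k;i,j}=t_{\widetilde{\gamma_1}}t_{\widetilde{\gamma_2}}^{-1}$, where $\widetilde{\gamma_1}\sqcup\widetilde{\gamma_2}$ bounds the regular $N_{1,2}$-neighborhood of $\alpha_{k;i,j}$ on $N_g$. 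Tracing these two boundary components through the figure for $\alpha_{k;i,j}$, one of them should be isotopic to $\alpha_{i,j}$ (so its Dehn twist is $T_{i,j}^{\pm 1}$), while the other sits inside a region containing four crosscaps and can be related, via further crosscap pushing maps (all in $\mathcal{G}$), to the curve $\alpha_{1,j',k',l'}$ underlying one of the type (iii) generators $T_{1,j',k',l'}^2\in\mathcal{G}$.

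Solving for $T_{i,j}^2$ then reduces to rearranging such a relation. The main obstacle is the precise geometric identification of $\widetilde{\gamma_2}$ and its matching with a type (iii) generator. If the identification is not clean for a single choice of $k$, one can instead combine the expressions obtained for several choices of $k$ (and possibly use both $a_{k;i,j}$ and $b_{k;i,j}$) so that the ``extra'' Dehn twists telescope to a product of $T_{1,j',k',l'}^{\pm 2}$'s and other $\mathcal{G}$-elements, leaving $T_{i,j}^2$ behind. Once the correct relation is written down, its verification becomes a routine calculation using Lemma~\ref{pushing1} together with the standard commutation and conjugation relations among Dehn twists in subsurfaces of $N_g$.
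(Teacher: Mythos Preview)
Your $g=3$ case is fine, but the $g\geq 4$ strategy does not work, for two reasons.

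First, the geometric identification is wrong. In Lemma~\ref{pushing1} the curves $\widetilde{\gamma_1},\widetilde{\gamma_2}$ are parallel push-offs of the loop $\alpha_{k;i,j}$ to either side; both of them pass through exactly the two crosscaps $i$ and $j$, with the newly created $k$-th crosscap sitting in the $N_{1,2}$ between them. Neither boundary curve is of type $\alpha_{1,j',k',l'}$, so the hoped-for link to a type~(iii) generator never materialises. In fact the complement of $\widetilde{\gamma_1}\cup\widetilde{\gamma_2}$ in $N_g$ is $N_{g-3,2}\sqcup N_{1,2}$, confirming that each $\widetilde{\gamma_m}$ meets only two crosscaps.

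Second, even granting one identification $\widetilde{\gamma_1}\simeq\alpha_{i,j}$, the relation you obtain is $a_{k;i,j}=T_{i,j}^{\pm 1}t_{\widetilde{\gamma_2}}^{\mp 1}$, which isolates the \emph{first} power $T_{i,j}$, an element not in $\mathcal{T}_2(N_g)$ at all. There is no mechanism here that produces a square, and the suggested ``telescoping over several $k$'' only strings together first powers of Dehn twists along pairwise non-isotopic curves; nothing forces these to cancel down to $T_{i,j}^2$.

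The paper's argument is of a different nature. It exhibits $T_{1,2}^2$ directly as a product $\psi_g(\gamma_g)\cdots\psi_3(\gamma_3)$ of $g-2$ crosscap pushing maps, where each $\gamma_m$ is a two-sided loop on $N_{g-1}^{(m)}$ (so each factor lies in $\mathcal{G}$ by Corollary~\ref{cor1}); the square arises because the whole composite pushes every other crosscap once around the pair $\{1,2\}$. Then one conjugates this product by crosscap transpositions $\sigma_{s,t}$ to reach an arbitrary $T_{i,j}^2$, observing that conjugating a crosscap pushing map by a crosscap transposition yields another crosscap pushing map (for a possibly different crosscap), hence again an element of $\mathcal{G}$. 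The type~(iii) generators play no role in this lemma.
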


\begin{proof}
We discuss by a similar argument in proof of Lemma~3.5 in \cite{Szepietowski2}. Let $\gamma _i$ be the two-sided simple loop on $N_{g-1}^{(i)}$ for $i=3,\dots ,g$ as in Figure~\ref{squere_crosscap1}. Then we have $T_{1,2}^2=\psi _g(\gamma _g)\cdots \psi _4(\gamma _4)\psi _3(\gamma _3)$ (see Figure~\ref{squere_crosscap1}). Since $\gamma _i\in \pi _1(N_{g-1}^{(i)})^+$, each $\psi _i(\gamma _i)$ is an element of $\mathcal{G}$ by Corollary~\ref{cor1}. Hence we have $T_{1,2}^2\in \mathcal{G}$. 

We denote by $\sigma _{i,j}$ the self-diffeomorphism on $N_g$ which is obtained by the transposition of the $i$-th crosscap and the $j$-th crosscap as in Figure~\ref{sigma_ij}. 
$\sigma _{i,j}$ is called the {\it crosscap transposition} (c.f.~\cite{Paris-Szepietowski}). 
For $1\leq i<j\leq g$, put $f_{i,j} \in \mathcal{M}(N_g)$ as follows:
\begin{align*}
f_{1,2}&:=1,\\
f_{1,j}&:=\sigma _{j-1,j}\cdots \sigma _{3,4}\sigma _{2,3}\ \text{ for }3\leq j\leq g,\\
f_{i,j}&:=\sigma _{i-1.i}\cdots \sigma _{2,3}\sigma _{1,2}f_{1,j}\ \text{ for }2\leq i<j\leq g.
\end{align*}
Then $T_{i,j}^2=f_{i,j}T_{1,2}^2f_{i,j}^{-1}=f_{i,j}\psi _g(\gamma _g)f_{i,j}^{-1}\cdots f_{i,j}\psi _4(\gamma _4)f_{i,j}^{-1}\cdot f_{i,j}\psi _3(\gamma _3)f_{i,j}^{-1}$.
Since the action of $\sigma _{i,j}$ on $N_g$ preserves the set of $i$-th crosscaps for $1\leq i\leq g$, 
$f_{i,j}\psi _k(\gamma _k)f_{i,j}^{-1}$ is an element of $\psi _{k^\prime }(\pi _1(N_{g-1}^{(k^\prime )}))$ for some $k^\prime $. By Corollary~\ref{cor1}, we have $f_{i,j}\psi _k(\gamma _k)f_{i,j}^{-1}\in \mathcal{G}$ and we obtain this lemma.
\end{proof}

\begin{figure}[h]
\includegraphics[scale=0.65]{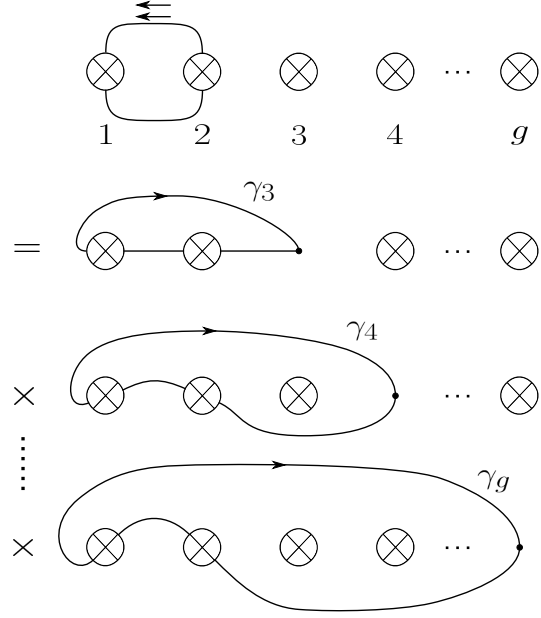}
\caption{$T_{1,2}^2$ is a product of crosscap pushing maps along $\gamma _3, \gamma _4,\dots ,\gamma _g$.}\label{squere_crosscap1}
\end{figure}

\begin{figure}[h]
\includegraphics[scale=0.65]{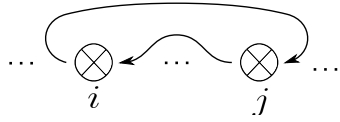}
\caption{The crosscap transposition $\sigma _{i,j}$.}\label{sigma_ij}
\end{figure}

Finally, by the following proposition, we complete the proof of Theorem~\ref{main-thm}.

\begin{prop}\label{y_kly_ij} 
For distinct $1\leq i,j,k,l\leq g$, $Y_{k,l}Y_{i,j}\in \mathcal{G}$.
\end{prop}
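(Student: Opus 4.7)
The plan is to produce a telescoping identity expressing $Y_{k,l}Y_{i,j}$ as a product of four factors, each already known to lie in $\mathcal{G}$. The key observation is the homomorphism identity
\[
Y_{k,a}Y_{k,b}=\psi_k(x_{k;a})\psi_k(x_{k;b})=\psi_k(x_{k;a}x_{k;b});
\]
since the product of the two one-sided loops $x_{k;a}$ and $x_{k;b}$ is two-sided, it lies in $\pi_1(N_{g-1}^{(k)})^+$, which means $Y_{k,a}Y_{k,b}\in \psi_k(\pi_1(N_{g-1}^{(k)})^+)\subset G\subset \mathcal{G}$ by Corollary~\ref{cor1}. The difficulty is that $Y_{k,l}$ and $Y_{i,j}$ do not share a first index, so the product $Y_{k,l}Y_{i,j}$ is not itself of this form.

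To remedy this, I first insert $1=Y_{k,i}Y_{k,i}^{-1}$ to obtain
\[
Y_{k,l}Y_{i,j}=(Y_{k,l}Y_{k,i})(Y_{k,i}^{-1}Y_{i,j}).
\]
The left factor equals $\psi_k(x_{k;l}x_{k;i})$, hence lies in $\mathcal{G}$ by the observation above. For the right factor, I insert $1=Y_{i,k}Y_{i,k}^{-1}$ between $Y_{k,i}^{-1}$ and $Y_{i,j}$ to get
\[
Y_{k,i}^{-1}Y_{i,j}=(Y_{k,i}^{-1}Y_{i,k})(Y_{i,k}^{-1}Y_{i,j}).
\]
By Lemma~\ref{t2_yy}, $Y_{k,i}^{-1}Y_{i,k}=\pm T_{i,k}^{\pm 2}$, which lies in $\mathcal{G}$ by Lemma~\ref{t2}. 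The remaining piece $Y_{i,k}^{-1}Y_{i,j}=Y_{i,k}^{-2}\cdot Y_{i,k}Y_{i,j}$ is a product of $(Y_{i,k}^2)^{-1}$ and $Y_{i,k}Y_{i,j}=\psi_i(x_{i;k}x_{i;j})$, both of which lie in $\psi_i(\pi_1(N_{g-1}^{(i)})^+)\subset \mathcal{G}$.

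Concatenating yields
\[
Y_{k,l}Y_{i,j}=(Y_{k,l}Y_{k,i})(Y_{k,i}^{-1}Y_{i,k})Y_{i,k}^{-2}(Y_{i,k}Y_{i,j})
\]
as a product of four elements of $\mathcal{G}$, which finishes the proof. The distinctness of $i,j,k,l$ is used only to ensure that all auxiliary Y-homeomorphisms $Y_{k,i}$ and $Y_{i,k}$ are defined. The only real step is identifying the right telescoping; once it is written down, the verification is a direct application of Corollary~\ref{cor1} together with Lemmas~\ref{t2_yy} and~\ref{t2}, so there is no serious remaining obstacle.
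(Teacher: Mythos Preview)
Your telescoping is correct: the identity
\[
Y_{k,l}Y_{i,j}=(Y_{k,l}Y_{k,i})(Y_{k,i}^{-1}Y_{i,k})\,Y_{i,k}^{-2}\,(Y_{i,k}Y_{i,j})
\]
really does express the left-hand side as a product of elements of $\mathcal{G}$, via Corollary~\ref{cor1} and Lemmas~\ref{t2_yy} and~\ref{t2}, which are exactly the ingredients the paper uses. The paper's own proof is organised as a case analysis on how $\{k,l\}$ and $\{i,j\}$ overlap (seven cases (a)--(g)), each case reduced to earlier ones by a short telescoping; your single identity compresses several of those cases into one line, so the approach is the same in spirit but packaged more uniformly.

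One point of scope worth noting: despite the phrase ``distinct $i,j,k,l$'' in the statement, the paper's proof actually establishes $Y_{k,l}Y_{i,j}\in\mathcal{G}$ for \emph{all} $i\neq j$ and $k\neq l$, and this generality is what is needed for Theorem~\ref{main-thm} (one must show $Y_{i,j}Y_{1,2}\in\mathcal{G}$ even when $\{i,j\}\cap\{1,2\}\neq\emptyset$). Your argument as written needs only $k\neq i$ for the auxiliary elements $Y_{k,i}$, $Y_{i,k}$ to make sense, so it already covers more than the all-distinct case; the remaining situation $k=i$ is the easiest of all, since then $Y_{i,l}Y_{i,j}=\psi_i(x_{i;l}x_{i;j})\in\psi_i(\pi_1(N_{g-1}^{(i)})^+)\subset\mathcal{G}$ directly by your opening observation.
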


\begin{proof}
$Y_{k,l}Y_{i,j}$ is the following product of elements of $\mathcal{G}$.

(a) case $(k,l)=(i,j)$:
\[
Y_{k,l}Y_{i,j}=Y_{i,j}^2.
\] 
By Corollary~\ref{cor1}, the right-hand side is an element of $\mathcal{G}$. 

(b) case $(k,l)=(j,i)$:
\[
Y_{j,i}Y_{i,j}=Y_{j,i}^2\cdot Y_{j,i}^{-1}Y_{i,j}\stackrel{\text{Lem.~\ref{t2_yy} }}{=}(a)\cdot T_{i,j}^{\pm 2}.
\]
By Lemma~\ref{t2}, the right-hand side is an element of $\mathcal{G}$.

(c) case $k=i$ and $l\not=j$:
\[
Y_{i,l}Y_{i,j}=\psi _i(x_{i;l})\psi _i(x_{i;j})=\psi _i(\alpha _{i;j,l})=a_{i;j,l}\stackrel{\text{Cor.~\ref{cor1}}}{\in }\mathcal{G}.
\]

(d) case $k\not=i$ and $l=j$:
\[
Y_{k,j}Y_{i,j}=Y_{k,j}Y_{k,i}\cdot Y_{k,i}^{-1}Y_{i,k}^{-1}\cdot Y_{i,k}Y_{i,j}=(c)\cdot (b)\cdot (c) \in \mathcal{G}.
\]

(e) case $k=j$ and $l\not=i$:
\[
Y_{j,l}Y_{i,j}=Y_{j,l}Y_{j,i}\cdot Y_{j,i}^{-1}Y_{i,j}\stackrel{\text{Lem.~\ref{t2_yy} }}{=}(c)\cdot T_{i,j}^{\pm 2} \stackrel{\text{Lem.~\ref{t2}}}{\in } \mathcal{G}.
\]

(f) case $k\not=j$ and $l=i$:
\[
Y_{k,i}Y_{i,j}=Y_{k,i}Y_{i,k}^{-1}\cdot Y_{i,k}Y_{i,j}\stackrel{\text{Lem.~\ref{t2_yy} }}{=}T_{i,k}^{\pm 2}\cdot (c) \stackrel{\text{Lem.~\ref{t2}}}{\in }\mathcal{G}.
\]

(g) case $\{ k,l\}\cap \{ i,j\}$ is empty:
\[
Y_{k,l}Y_{i,j}=Y_{k,l}Y_{k,j}\cdot Y_{k,j}^{-1}Y_{k,j}^{-1}\cdot Y_{k,j}Y_{i,j}=(c)\cdot (a)\cdot (d) \in \mathcal{G}.
\]
We have completed this proposition.
\end{proof}

By a similar discussion in Subsection~\ref{proof_main} and Corollary~\ref{cor2}, we obtain the following theorem.
\begin{thm}\label{main-thm2}
For $g\geq 3$, $\mathcal{T}_2(N_g)$ is generated by following elements:
\begin{enumerate}
 \item[(i)] $a_{k;i,i+1}$, $b_{k;i,i+1}$, $a_{k;k-1,k+1}$, $b_{k;k-1,k+1}$ \ for $1\leq k\leq g$, $1\leq i\leq g-1$ and $i\not=k-1, k$,
 \item[(ii)] $Y_{1,j}^2$ \ for $2\leq j\leq g$,
 \item[(iii)] $T_{1,j,k,l}^2$ \ for $2\leq j< k< l\leq g$ when $g\geq 4$,
\end{enumerate} 
where the indices are considered modulo $g$.
\end{thm}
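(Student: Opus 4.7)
The plan is to mirror the proof of Theorem~\ref{main-thm} almost verbatim, replacing each invocation of Corollary~\ref{cor1} with Corollary~\ref{cor2}. Let $\mathcal{G}'$ denote the subgroup of $\mathcal{T}_2(N_g)$ generated by the elements listed in (i), (ii), (iii) of Theorem~\ref{main-thm2}. The containment $\mathcal{G}' \subset \mathcal{T}_2(N_g)$ is immediate since all listed generators are images of the crosscap pushing maps $\psi_k$ restricted to $\pi_1(N_{g-1}^{(k)})^+$ (for (i) and (ii)) or squares of Dehn twists along two-sided simple closed curves (for (iii)). Hence the content is the reverse inclusion $\mathcal{T}_2(N_g) \subset \mathcal{G}'$.

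The first step is to observe that the generating set listed in Corollary~\ref{cor2} for the subgroup $G = \bigcup_{k=1}^g \psi_k(\pi_1(N_{g-1}^{(k)})^+)$ is, index for index, contained in $\mathcal{G}'$: the elements of type (i) in Corollary~\ref{cor2} match those of type (i) in Theorem~\ref{main-thm2} (both ranges of $i$ are $1\leq i\leq g-1$), and the elements $Y_{1,j}^2$ of type (ii) in Corollary~\ref{cor2} appear verbatim as type (ii) in Theorem~\ref{main-thm2}. Consequently $G\subset \mathcal{G}'$, and in particular $\psi_k(\pi_1(N_{g-1}^{(k)})^+)\subset \mathcal{G}'$ for every $1\leq k\leq g$.

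Next I would invoke Proposition~\ref{r-m} to reduce the problem to showing that each element of $B'$ lies in $\mathcal{G}'$. The generators $Y_{i,j}^2 = \psi_i(x_{i;j}^2)$ lie in $\psi_i(\pi_1(N_{g-1}^{(i)})^+) \subset \mathcal{G}'$ by the previous paragraph. The generators $T_{1,j,k,l}^2$ are in $\mathcal{G}'$ by definition. For the remaining generators $Y_{k,l}Y_{i,j}$ and $Y_{1,2}^{-1}T_{1,j,k,l}^2Y_{1,2}$, I would copy the proofs of Lemma~\ref{t2}, the lemma expressing $Y_{1,2}^{-1}T_{1,j,k,l}^2Y_{1,2}$ as a conjugate product, and Proposition~\ref{y_kly_ij} essentially word for word. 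Every appeal in those proofs to Corollary~\ref{cor1} (to guarantee that elements such as $a_{1;k,l}$, $a_{1;2,j}$, $a_{i;j,l}$, and the conjugates $f_{i,j}\psi_k(\gamma_k)f_{i,j}^{-1}$ lie in the ambient group) can be replaced by an appeal to Corollary~\ref{cor2}, since we have just established the stronger statement that $G\subset \mathcal{G}'$.

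There is no genuine obstacle; the only point that requires a moment's care is bookkeeping in the conjugation $f_{i,j}\psi_k(\gamma_k)f_{i,j}^{-1}$ appearing in Lemma~\ref{t2}. The crosscap transpositions $\sigma_{i,j}$ permute the set of crosscaps, so each such conjugate is again of the form $\psi_{k'}(\gamma_{k'}')$ for some $k'$ and some $\gamma_{k'}' \in \pi_1(N_{g-1}^{(k')})^+$; under Corollary~\ref{cor2} this still lies in $G \subset \mathcal{G}'$. Since $B' \subset \mathcal{G}'$ and $B'$ generates $\mathcal{T}_2(N_g)$, this proves $\mathcal{T}_2(N_g) \subset \mathcal{G}'$ and thus equality, completing the proof.
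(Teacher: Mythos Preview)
Your proposal is correct and matches the paper's own proof, which consists of the single sentence ``By a similar discussion in Subsection~\ref{proof_main} and Corollary~\ref{cor2}, we obtain the following theorem.'' You have simply spelled out that similar discussion in detail, correctly noting that the generators of $G$ listed in Corollary~\ref{cor2} coincide with items (i) and (ii) of Theorem~\ref{main-thm2}, so $G\subset\mathcal{G}'$, after which the proofs of Lemma~\ref{t2}, the lemma on $Y_{1,2}^{-1}T_{1,j,k,l}^2Y_{1,2}$, and Proposition~\ref{y_kly_ij} go through verbatim with Corollary~\ref{cor2} in place of Corollary~\ref{cor1}.
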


Since the number of generators in Theorem~\ref{main-thm2} is $\frac{1}{6}(g^3+6g^2-7g-12)$ for $g\geq 4$ and $3$ for $g=3$, the number of generators in Theorem~\ref{main-thm2} is smaller than the number of generators in Theorem~\ref{main-thm}. On the other hand, by Theorem~\ref{first-homology}, the dimension of the first homology group $H_1(\mathcal{T}_2(N_g))$ of $\mathcal{T}_2(N_g)$ is $\binom{g}{3}+\binom{g}{2}-1=\frac{1}{6}(g^3-g-6)$ for $g\geq 4$. The difference of them is $g^2-g-1$. The authors do not know the minimal number of generators for $\mathcal{T}_2(N_g)$ when $g\geq 4$. 

\section{Normal generating set for $\mathcal{T}_2(N_g)$}\label{section-normal-gen}

The next lemma is a generalization of the argument in the proof of Lemma~3.5 in \cite{Szepietowski2}.

\begin{lem}\label{t^2_product_crosscap}
Let $\gamma $ be a non-separating two-sided simple closed curve on $N_g$ such that $N_g-\gamma $ is a non-orientable surface. Then $t_\gamma ^2$ is a product of crosscap pushing maps along two-sided non-separating simple loops such that their crosscap pushing maps are conjugate to $a_{1;2,3}$ in $\mathcal{M}(N_g)$.
\end{lem}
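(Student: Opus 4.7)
The plan is to reduce the statement to the special case $\gamma=\alpha_{1,2}$ handled in the proof of Lemma~\ref{t2}, and then verify that the individual factors obtained from that decomposition are all conjugate to $a_{1;2,3}$ in $\mathcal{M}(N_g)$.

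First I would argue that any such $\gamma$ lies in the $\mathcal{M}(N_g)$-orbit of $\alpha_{1,2}$. Cutting $N_g$ along a non-separating two-sided simple closed curve produces a connected surface with two boundary components and Euler characteristic $2-g$; when the result is non-orientable, it must be homeomorphic to $N_{g-2,2}$. Since cutting along $\alpha_{1,2}$ yields the same type of surface, the change of coordinates principle for non-orientable surfaces provides $\phi\in\mathcal{M}(N_g)$ with $\phi(\alpha_{1,2})=\gamma$, and consequently
\[
t_\gamma^2 \;=\; \phi\,T_{1,2}^2\,\phi^{-1}.
\]

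Next I would invoke the explicit decomposition from the proof of Lemma~\ref{t2}:
\[
T_{1,2}^2 \;=\; \psi_g(\gamma_g)\cdots\psi_4(\gamma_4)\,\psi_3(\gamma_3),
\]
where each $\gamma_i$ is a two-sided non-separating simple loop on $N_{g-1}^{(i)}$ based at $x_i$. Because conjugating a crosscap pushing map by a diffeomorphism of $N_g$ produces a crosscap pushing map along the image loop (the blowup construction is equivariant under diffeomorphisms of the base), each factor of the product
\[
t_\gamma^2 \;=\; \prod_{i=g}^{3}\phi\,\psi_i(\gamma_i)\,\phi^{-1}
\]
is itself a crosscap pushing map along a two-sided non-separating simple loop on $N_g$.

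Finally I would show that each such factor is conjugate in $\mathcal{M}(N_g)$ to $a_{1;2,3}=\psi_1(\alpha_{1;2,3})$. Conjugating by $\eta^{1-i}$ moves the $i$-th crosscap to the first crosscap and identifies $\eta^{1-i}\psi_i(\gamma_i)\eta^{i-1}$ with $\psi_1(\delta_i)$ for a two-sided non-separating simple loop $\delta_i$ on $N_{g-1}^{(1)}$ based at $x_1$. Inspection of Figure~\ref{squere_crosscap1} shows that $\delta_i$ has the same topological configuration as $\alpha_{1;2,3}$ (both are non-separating two-sided simple loops whose regular neighborhood is a based annulus and whose complement is non-orientable of the same genus), so the change of coordinates principle for pointed non-orientable surfaces furnishes $f\in\mathcal{M}(N_{g-1}^{(1)},x_1)$ with $f(\delta_i)=\alpha_{1;2,3}$. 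Applying the blowup homomorphism $\varphi$ to $f$ yields an element of $\mathcal{M}(N_g)$ that conjugates $\psi_1(\delta_i)$ to $a_{1;2,3}$, as required.

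The main obstacle will be the last step: matching $\delta_i$ with $\alpha_{1;2,3}$ up to an element of $\mathcal{M}(N_{g-1}^{(1)},x_1)$, which requires reading off the precise configuration of the loops $\gamma_i$ from Figure~\ref{squere_crosscap1} and confirming that their complements and regular neighborhoods agree. Once that verification is in place, the pointed change of coordinates closes the argument.
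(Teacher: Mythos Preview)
The paper does not actually supply a proof of this lemma; it only remarks that the statement is ``a generalization of the argument in the proof of Lemma~3.5 in \cite{Szepietowski2}'', which is precisely the argument reproduced in Lemma~\ref{t2}. Your proposal carries out exactly this generalization: reduce an arbitrary such $\gamma$ to $\alpha_{1,2}$ by change of coordinates (using that the complement is $N_{g-2,2}$ in both cases), invoke the factorisation $T_{1,2}^2=\psi_g(\gamma_g)\cdots\psi_3(\gamma_3)$ from Lemma~\ref{t2}, and then check that each factor lies in the $\mathcal{M}(N_g)$-conjugacy class of $a_{1;2,3}$. So your overall strategy is the intended one.

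One point where the paper's own argument differs from yours, and is somewhat cleaner, is the conjugacy check in your step~3. In the proof of Theorem~\ref{normal-gen} (immediately following the lemma) the paper does not work with based loops in $N_{g-1}^{(1)}$ at all. Instead it uses Lemma~\ref{pushing1}: any crosscap pushing map along a non-separating two-sided simple loop equals $t_{c_1}t_{c_2}^{-1}$, and cutting $N_g$ along $c_1\cup c_2$ always yields $N_{1,2}\sqcup N_{g-3,2}$. Since the same holds for $a_{1;2,3}$, the ordinary (unbased) change of coordinates on $N_g$ furnishes the conjugating diffeomorphism directly. This sidesteps the pointed change-of-coordinates you anticipate as the ``main obstacle'': you would otherwise have to verify that each $\delta_i$ and $\alpha_{1;2,3}$ have homeomorphic complements in $N_{g-1}^{(1)}$ \emph{as based loops}, including matching the orientation of the loop, which is a genuine extra verification. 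Replacing your step~3 by the $N_g$-level argument removes that difficulty and makes the proof go through without reference to Figure~\ref{squere_crosscap1}.
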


\begin{proof}[Proof of Theorem~\ref{normal-gen}]

By Theorem~\ref{main-thm}, $\mathcal{T}_2(N_g)$ is generated by (I) crosscap pushing maps along non-separating two-sided simple loops and (I\hspace{-0.03cm}I) $T_{1,j,k,l}^2$ for $2\leq j< k< l\leq g$. When $g=3$, $\mathcal{T}_2(N_g)$ is generated by $T_{1,2}^2$, $T_{1,3}^2$, $T_{2,3}^2$. 
Recall $T_{i,j}^2=a_{k;i,j}^{-1}$ when $g=3$.
Since $N_g-\alpha _{i,j}$ is non-orientable for $g\geq 3$, 
$a_{k;i,j}$ is conjugate to $a_{k^\prime ;i^\prime ,j^\prime }$ in $\mathcal{M}(N_g)$. 
Hence Theorem~\ref{normal-gen} is clear when $g=3$.

Assume $g\geq 4$. For a non-separating two-sided simple loop $c$ on $N_{g-1}^{(k)}$ based at $x_k$, by Lemma~\ref{pushing1}, there exist non-separating two-sided simple closed curves $c_1$ and $c_2$ such that $\psi (c)=t_{c_1}t_{c_2}^{-1}$, where $c_1$ and $c_2$ are 
images of boundary components of regular neighborhood of $c$ in $N_{g-1}^{(k)}$ to $N_g$ by a blowup. Then the surface obtained by cutting $N_g$ along $c_1$ and $c_2$ is diffeomorphic to a disjoint sum of $N_{g-3,2}$ and $N_{1,2}$. Thus mapping classes of type (I) is conjugate to $a_{1;2,3}$ in $\mathcal{M}(N_g)$. We obtain Theorem~\ref{normal-gen} for $g=4$.

Assume $g\geq 5$. 
Simple closed curves $\alpha _{i,j,k,l}$ satisfy the condition of Lemma~\ref{t^2_product_crosscap}. 
Therefore $T_{1,j,k,l}^2$ is a product of crosscap pushing maps along non-separating two-sided simple loops and such crosscap pushing maps are conjugate to $a_{1;2,3}$ in $\mathcal{M}(N_g)$. We have completed the proof of Theorem~\ref{normal-gen}. 
\end{proof}


\section{First homology group of $\mathcal{T}_2(N_g)$}\label{section-first-homology}


By the argument in the proof of Proposition~\ref{r-m}, for $g\geq 2$, we have the following exact sequence:
\begin{eqnarray}\label{exact1}
1\longrightarrow \mathcal{T}_2(N_g)\longrightarrow \Gamma _2(N_g)\longrightarrow \mathbb Z _2\longrightarrow 0, 
\end{eqnarray}
where $\mathbb Z _2$ is generated by the equivalence class of a Y-homeomorphism. 

The {\it level 2 principal congruence subgroup} $\Gamma _2(n)$ of $GL(n,\mathbb Z)$ is the kernel of the natural surjection $GL(n,\mathbb Z)\twoheadrightarrow GL(n,\mathbb Z_2)$. Szepietowski~\cite[Corollary~4.2]{Szepietowski2} showed that there exists an isomorphism $\theta :\Gamma _2(N_3)\rightarrow \Gamma _2(2)$ which is induced by the action of $\Gamma _2(N_3)$ on the free part of $H_1(N_3;\mathbb Z)$. Since the determinant of the action of a Dehn twist on the free part of $H_1(N_3;\mathbb Z)$ is $1$, we have the following commutative diagram of exact sequences:
\begin{eqnarray}\label{exact2}
\xymatrix{
1 \ar[r]^{} & \mathcal{T}_2(N_3) \ar[d]_{\theta |_{\mathcal{T}_2(N_3)}} \ar[r]^{} \ar@{}[dr]|\circlearrowleft & \Gamma _2(N_3) \ar[d]_{\theta } \ar[r]^{} \ar@{}[dr]|\circlearrowleft & \mathbb Z_2 \ar[d]^{} \ar[r] & 1\\
1 \ar[r]^{} & SL(2,\mathbb Z)[2] \ar[r]^{} & \Gamma _2(2) \ar[r]^{\rm{det}} & \mathbb Z_2 \ar[r] & 1,\\
}\
\end{eqnarray}
where $SL(n,\mathbb Z)[2]:=\Gamma _2(n)\cap SL(n,\mathbb Z)$ is the {\it level 2 principal congruence subgroup} of the integral special linear group $SL(n,\mathbb Z)$. By the commutative diagram~(\ref{exact2}), $\mathcal{T}_2(N_3)$ is isomorphic to $SL(2,\mathbb Z)[2]$.

\begin{proof}[Proof of Theorem~\ref{first-homology}]
For $g=3$, the first homology group $H_1(\mathcal{T}_2(N_3))$ is isomorphic to $H_1(SL(2,\mathbb Z)[2])$ by the commutative diagram~(\ref{exact2}). The restriction of the natural surjection from $SL(2,\mathbb Z)$ to the projective special linear group $PSL(2,\mathbb Z)$ to $SL(n,\mathbb Z)[2]$ gives the following commutative diagram of exact sequences: 
\begin{eqnarray}\label{exact3}
\xymatrix{
1 \ar[r]^{} & \mathbb Z_2[-E] \ar[d]_{\rm{id}} \ar[r]^{} \ar@{}[dr]|\circlearrowleft & SL(2,\mathbb Z) \ar[d]_{} \ar[r]^{} \ar@{}[dr]|\circlearrowleft & PSL(2,\mathbb Z) \ar[d]^{} \ar[r] & 1\\
1 \ar[r]^{} & \mathbb Z_2[-E] \ar[r]^{} & SL(2,\mathbb Z)[2] \ar[r]^{} & PSL(2,\mathbb Z)[2] \ar[r] & 1,\\
}\
\end{eqnarray}
where $E$ is the identity matrix and $PSL(n,\mathbb Z)[2]:=SL(n,\mathbb Z)/\{ \pm E\}$ is the level 2 principal congruence subgroup of $PSL(2,\mathbb Z)$. Since $PSL(2,\mathbb Z)[2]$ is isomorphic to the free group $F_2$ of rank 2 and $-E$ commutes with all matrices, the exact sequence in the lower row of Diagram~(\ref{exact3}) is split and $SL(2,\mathbb Z)[2]$ is isomorphic to $F_2\oplus \mathbb Z_2$. Thus $H_1(\mathcal{T}_2(N_3))$ is isomorphic to $\mathbb Z^2\oplus \mathbb Z_2$.

For $g\geq 2$, the exact sequence~(\ref{exact1}) induces the five term exact sequence between these groups:
\[
H_2(\Gamma _2(N_g))\longrightarrow H_2(\mathbb Z _2)\longrightarrow H_1(\mathcal{T}_2(N_g))_{\mathbb Z _2}\longrightarrow H_1(\Gamma _2(N_g))\longrightarrow H_1(\mathbb Z _2)\longrightarrow 0,
\]
where 
\[
H_1(\mathcal{T}_2(N_g))_{\mathbb Z _2}:=H_1(\mathcal{T}_2(N_g))/\bigl< fm-m\mid m\in H_1(\mathcal{T}_2(N_g)),\ f\in \mathbb Z _2\bigr>.
\]
For $m\in H_1(\mathcal{T}_2(N_g))$ and $f\in \mathbb Z _2$, $fm:=[f^\prime m^\prime {f^\prime}^{-1}]\in H_1(\mathcal{T}_2(N_g))$ for some representative $m^\prime \in \mathcal{T}_2(N_g)$ and $f^\prime \in \Gamma _2(N_g)$. Since $H_2(\mathbb Z _2)\cong H_2(\mathbb RP^\infty )=0$ and $H_1(\mathbb Z _2)\cong \mathbb Z _2$, we have the short exact sequence:

\[
0\longrightarrow H_1(\mathcal{T}_2(N_g))_{\mathbb Z _2}\longrightarrow H_1(\Gamma _2(N_g))\longrightarrow \mathbb Z _2\longrightarrow 0.
\]   
Since Hirose and Sato~\cite{Hirose-Sato} showed that $H_1(\Gamma _2(N_g))\cong \mathbb Z_2^{\binom{g}{3}+\binom{g}{2}}$, it is sufficient for the proof of Theorem~\ref{first-homology} when $g\geq 5$ to prove that the action of $\mathbb Z_2\cong \Gamma _2(N_g)/\mathcal{T}_2(N_g)$ on the set of the first homology classes of generators for $\mathcal{T}_2(N_g)$ is trivial.

By Theorem~\ref{normal-gen}, $\mathcal{T}_2(N_g)$ is generated by crosscap pushing maps along non-separating two-sided simple loops for $g\geq 5$. Let $\psi (\gamma )=t_{\gamma _1}t_{\gamma _2}^{-1}$ be a crosscap pushing map along a non-separating two-sided simple loop $\gamma $, where $\gamma _1$ and $\gamma _2$ are images of boundary components of the regular neighborhood of $\gamma $ in $N_{g-1}$ to $N_g$ by a blowup. The surface $S$ obtained by cutting $N_g$ along $\gamma _1$ and $\gamma _2$ is diffeomorphic to a disjoint sum of $N_{g-3,2}$ and $N_{1,2}$. Since $g-3\geq 5-3=2$, we can define a Y-homeomorphism $Y$ on the component of $S$. The Y-homeomorphism is not a product of Dehn twists. Hence $[Y]$ is the nontrivial element in $\mathbb Z_2$ and clearly $Y\psi (\gamma )Y^{-1}=\psi (\gamma )$ in $\Gamma _2(N_g)$, i.e. $[Y\psi (\gamma )Y^{-1}]=[\psi (\gamma )]$ in $H_1(\mathcal{T}_2(N_g))$. Therefore the action of $\mathbb Z_2$ on $H_1(\mathcal{T}_2(N_g))$ is trivial and we have completed the proof of Theorem~\ref{first-homology}. 

\end{proof}

\begin{rem}
When $g=3$, we have the exact sequence
\[
0\longrightarrow H_1(\mathcal{T}_2(N_3))_{\mathbb Z _2}\longrightarrow H_1(\Gamma _2(N_3))\longrightarrow \mathbb Z _2\longrightarrow 0
\] 
by the argument in the proof of Theorem~\ref{first-homology}. Since $H_1(\Gamma _2(N_3))\cong H_1(\Gamma _2(2))\cong \mathbb Z_2^4$, we showed that the action of $\mathbb Z_2\cong \Gamma _2(N_3)/\mathcal{T}_2(N_3)$ on $H_1(\mathcal{T}_2(N_3))$ is not trivial by Theorem~\ref{first-homology} when $g=3$.
\end{rem}
\par
{\bf Acknowledgement: } The authors would like to express their gratitude to Hisaaki Endo and Susumu Hirose, for his encouragement and helpful advices. The authors also wish to thank Masatoshi Sato for his comments and helpful advices. The second author was supported by JSPS KAKENHI Grant number 15J10066.

\end{document}